\newtheorem{theorem}{Theorem}[section]
\newtheorem{lemma}[theorem]{Lemma}
\newtheorem{corollary}[theorem]{Corollary}
\newtheorem{proposition}[theorem]{Proposition}
\newtheorem{definition}[theorem]{Definition}
\numberwithin{equation}{section}
\DeclareMathOperator{\conv}{conv}
\DeclareMathOperator{\spn}{span}
\DeclareMathOperator{\id}{id}
\DeclareMathOperator{\C}{\mathbb{C}}
\DeclareMathOperator{\N}{\mathbb{N}}
\DeclareMathOperator{\h}{\mathcal{H}}
\DeclareMathOperator{\B}{\mathcal{B}}
\begin{document}

\title{$M_d$-multipliers of a locally compact group}
\author{Bat-Od Battseren}
\maketitle
\begin{abstract} We show that the space $M_d(G)$ of $M_d$-multipliers of a locally compact group $G$ is isometrically isomorphic to the Banach space of bounded functionals on the $d$-fold Haagerup tensor product of $L^1(G)$ vanishing on the kernel of the convolution map. Consequently, we see that $M_d(G)$ is isometrically isomorphic to the dual space of $X_d(G)$, the completion of $L^1(G)$ in the dual of $M_d(G)$. We also show that $M_d$-type-approximation-properties are inherited to lattices.
\end{abstract}

\section*{INTRODUCTION}
In \cite{Pis05}, a commutative Banach algebra $M_d(\Gamma)$ was introduced in pursuit of studying Dixmier's Similarity Problem, where $\Gamma$ is a discrete group and $d$ is an integer greater than one. When $d=2$, this is the algebra of Herz-Schur multipliers of $\Gamma$. In general, $M_d(\Gamma)$ stands between the Fourier-Stieltjes algebra $B(\Gamma)$ and the algebra of bounded functions $\ell^\infty(\Gamma)$. By the usual $(\ell^\infty(\Gamma),\ell^1(\Gamma))$-duality, we can embed $\ell^1(\Gamma)$ in the dual of $M_d(\Gamma)$. Denote by $X_d(\Gamma)$ the completion of $\ell^1(\Gamma)$ in $M_d(\Gamma)^*$. Pisier showed in \cite{Pis05} that $M_d(\Gamma)\cong X_d(\Gamma)^*$ isometrically using Paulsen-Smith's theorem \cite{PS87}. In essence, it means the space $M_d(\Gamma)$ can be alternatively described as the dual of a quotient of the $d$-fold Haagerup tensor product of  $\ell^1(\Gamma)$. 

This last construction can be performed for a locally compact group $G$ by using $L^1(G)$ instead of $\ell^1(\Gamma)$. However, as indicated in \cite[Remark after Theorem 2.10]{Pis05}, it is not clear how the functionals in this space would correspond to (continuous) functions on $G$. In the discrete case, this correspondence was clear because $\Gamma$ is naturally contained in $\ell^1(\Gamma)$. We elaborate on this problem for locally compact second countable groups.

In \cite{Bat23}, the author defined the space $M_d(G)$ of \textit{$M_d$-multipliers} of a locally compact group $G$ as the subspace of all continuous functions in $M_d(G_\mathrm{d})$, where $G_\mathrm{d}$ denotes the discrete realization of $G$. More precisely:
\begin{definition}\label{defn Md}
Let $G$ be a locally compact group. A bounded continuous function $\varphi$ on $G$ is called an \textit{$M_d$-multiplier} if there exist Hilbert spaces $\h_0,\dots, \h_d$ and bounded maps $\xi_i:G\rightarrow \B(\h_i,\h_{i-1})$ for $i=1,\dots,d$ such that $\h_0=\h_d=\C$ and 
\begin{align}\label{eq eq}
\varphi(x_1\cdots x_d) = \xi_1(x_1)\cdots \xi_d(x_d) (1) \quad \text{for all}\quad x_1,\dots,x_d\in G.
\end{align}
We denote by $M_d(G)$ the space of all $M_d$-multipliers and endow it with the norm
\begin{align*}
\|\varphi\|_{M_d} = \inf \prod_{i=1}^d \|\xi_i\|_\infty
\end{align*} where the infimum is taken over all $\h_i$'s and $\xi_i$'s satisfying \eqref{eq eq}.

\end{definition}
This definition is compatible with the definition given by Pisier for discrete groups. Because the identity map $M_d(G_\mathrm{d})\rightarrow \ell^\infty (G_\mathrm{d})$ is contracting, $M_d(G)= M_d(G_\mathrm{d})\cap C(G)$ becomes a commutative Banach algebra. Our main result is the following, which demonstrates that the two previously mentioned constructions of $M_d(G)$ are the same for locally compact second countable groups. 
\begin{theorem}\label{thm1}
Let $G$ be a locally compact second countable group and let $d$ be an integer greater than one. 
\begin{enumerate}
\item If $\varphi$ is an $M_d$-multiplier of $G$, i.e. $\varphi\in M_d(G) = M_d(G_\mathrm{d})\cap C(G)$, then there is a  bounded functional $\hat{\varphi}$ on $L^1(G)\otimes_h \cdots \otimes_h L^1(G)$ such that $\|\hat{\varphi}\|=\|\varphi\|_{M_d}$ and
\begin{align*}
\hat{\varphi} (f_1\otimes \cdots \otimes f_d) = (\varphi, f_1*\cdots *f_d)_{(L^\infty(G),L^1(G))}
\end{align*}
for all $f_1,\dots,f_d\in L^1(G)$.
\item Conversely, if $\phi$ is a bounded functional  on $L^1(G)\otimes_h \cdots \otimes_h L^1(G)$ vanishing on the kernel of the convolution map
$
\conv_d:L^1(G)\otimes \cdots \otimes L^1(G)\rightarrow L^1(G),
$ then there is an $M_d$-multiplier $\check{\phi}\in M_d(G)$ such that $\|\check{\phi}\|_{M_d}=\|\phi\|$ and
\begin{align*}
(\check{\phi}, f_1*\cdots *f_d)_{(L^\infty(G),L^1(G))} = \phi(f_1\otimes \cdots \otimes f_d)
\end{align*} for all $f_1,\dots,f_d\in L^1(G)$.
\end{enumerate}
Moreover, we have $\varphi = \check{\hat{\varphi}}$ and $\phi=\hat{\check{\phi}}$.
\end{theorem}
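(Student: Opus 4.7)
The plan is to deduce the theorem from the Paulsen--Smith representation of bounded functionals on the Haagerup tensor product --- the same tool Pisier used in the discrete case --- reducing the new content in the locally compact setting to a measurability / selection argument that exploits second countability of $G$.

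I would begin with Part~(2), which is the more direct application of known operator-space machinery. Given $\phi\in(L^1(G)\otimes_h\cdots\otimes_h L^1(G))^*$, the Paulsen--Smith theorem furnishes Hilbert spaces $\h_0=\C=\h_d,\h_1,\dots,\h_{d-1}$ and completely bounded linear maps $T_i:L^1(G)\to\B(\h_i,\h_{i-1})$ with
\[
\phi(f_1\otimes\cdots\otimes f_d)=T_1(f_1)T_2(f_2)\cdots T_d(f_d),\qquad \|\phi\|=\inf\prod_i\|T_i\|_{cb}.
\]
Via the predual identification $L^1(G)^*=L^\infty(G)$, each $T_i$ can be recast as $T_i(f)=\int_G f(x)\xi_i(x)\,dx$ (weak integral) for a weak-$*$-measurable, essentially bounded $\xi_i:G\to\B(\h_i,\h_{i-1})$ with $\|\xi_i\|_\infty=\|T_i\|_{cb}$. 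The hypothesis $\phi|_{\ker\conv_d}=0$ then forces $\xi_1(x_1)\cdots\xi_d(x_d)(1)$ to depend, almost everywhere, only on the product $x_1\cdots x_d$; second countability of $G$ ensures the existence of a bounded continuous representative $\check{\phi}:G\to\C$, which automatically satisfies~\eqref{eq eq} and hence gives $\|\check{\phi}\|_{M_d}\le\prod\|\xi_i\|_\infty=\|\phi\|$.

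For Part~(1), define
\[
\hat{\varphi}(f_1\otimes\cdots\otimes f_d)=\int_G\varphi(y)(f_1*\cdots*f_d)(y)\,dy,
\]
which is well-defined since $\varphi\in L^\infty(G)$ and $f_1*\cdots*f_d\in L^1(G)$. To establish $\|\hat{\varphi}\|\le\|\varphi\|_{M_d}$, I would invoke a measurable-selection argument on the data of Definition~\ref{defn Md}: for any $\varepsilon>0$, produce a decomposition $(\tilde{\xi}_i)$ of $\varphi$ with $\prod_i\|\tilde{\xi}_i\|_\infty\le\|\varphi\|_{M_d}+\varepsilon$ in which each $\tilde{\xi}_i$ is weak-$*$ measurable. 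Separability of $L^1(G)$ (from second countability) lets one reduce to separable $\h_i$, after which a selection/regularization argument built around the continuity of $\varphi\circ m:G^d\to\C$ yields such measurable $\tilde{\xi}_i$. These integrate to completely bounded $\tilde{T}_i:L^1(G)\to\B(\h_i,\h_{i-1})$, and the standard Haagerup-tensor estimate $|\tilde{T}_1(f_1)\cdots\tilde{T}_d(f_d)|\le\prod\|\tilde{T}_i\|_{cb}\cdot\|f_1\otimes\cdots\otimes f_d\|_h$ (valid for matrix-amplified factorizations too) assembles them into a functional on $L^1(G)^{\otimes_h d}$ of norm at most $\prod\|\tilde{\xi}_i\|_\infty$ that agrees with $\hat{\varphi}$ on simple tensors; letting $\varepsilon\to 0$ gives the desired bound. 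The reciprocal identities $\varphi=\check{\hat{\varphi}}$ and $\phi=\hat{\check{\phi}}$ then fall out by comparing both sides on the dense sets of convolutions $f_1*\cdots*f_d$ (which separate $L^\infty(G)$, hence determine the continuous $\varphi$) and of simple tensors (which span the algebraic tensor product, hence determine $\phi$).

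The main obstacle is this measurable-selection step in Part~(1). Definition~\ref{defn Md} only insists that the $\xi_i$ be bounded, and a priori there is no reason for them to be measurable; without measurability the weak integrals defining the candidate $\tilde{T}_i$ need not even make sense. The real substance of the theorem is therefore to show that, for a continuous $\varphi\in M_d(G)$, the infimum defining $\|\varphi\|_{M_d}$ is essentially attained by measurable decompositions --- and this is precisely the point where second countability (through separability of $L^1(G)$, the continuity of $\varphi\circ m$, and the available measurable-selection theorems) becomes essential.
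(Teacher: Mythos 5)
Your high-level architecture matches the paper's (Paulsen--Smith in both directions, inversion checked on dense sets), but the two steps you flag or gloss over are precisely the content of the theorem, and neither is actually carried out. In Part~(1) you defer everything to an unspecified ``measurable-selection argument'' and then candidly state that this is the main obstacle --- so the proof is not complete as written. Moreover, no selection theorem is needed: the paper's route (Lemmas~\ref{lem total} and~\ref{lem cont}, adapting Haagerup's $d=2$ argument) is to compress each $\h_i$ onto the closed spans of $A_i=\{\xi_{i+1}(x_{i+1})\cdots\xi_d(x_d)1\}$ and $B_i=\{[\xi_1(x_1)\cdots\xi_i(x_i)]^*1\}$, which does not increase $\prod_i\|\xi_i\|_\infty$; after this compression the matrix coefficients $\langle\xi_i(x)v_i,v_{i-1}\rangle$ against these total sets are literally values $\varphi(x_1\cdots x\cdots x_d)$ of the continuous function $\varphi$, so the $\xi_i$ are \emph{automatically} WOT-continuous and the $\h_i$ automatically separable. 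Continuity is extracted from the given decomposition rather than selected; this is the idea your proposal is missing.

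In Part~(2) there is a second gap of the same nature. The $\xi_i$ you obtain from $T_i(f)=\int f(x)\xi_i(x)\,dx$ are only defined almost everywhere, so the assertion that $\xi_1(x_1)\cdots\xi_d(x_d)(1)$ ``depends, almost everywhere, only on the product'' and that ``second countability ensures a bounded continuous representative which automatically satisfies \eqref{eq eq}'' is not a proof: Definition~\ref{defn Md} requires the factorization to hold at \emph{every} point of $G^d$, and an a.e.-defined function of the product has no reason a priori to admit a continuous version. The paper resolves this by first compressing to make the sets $C_i,D_i$ total (Lemma~\ref{lem total2}), then defining $\check{\eta}_i(x)$ for \emph{every} $x\in G$ as the limit $\lim_n\eta_i(\lambda(x)h_n)$ on a total set of vectors, using a bounded approximate identity and the hypothesis $\phi\in Y^\perp$ to check well-definedness; the everywhere-valid factorization then holds by construction, and continuity of $\check{\phi}$ follows from Lemma~\ref{lem meas cont} (a function of the form $\varphi(yx^{-1})=\langle A(x),B(y)\rangle$ with $A,B$ bounded weakly measurable into a separable Hilbert space multiplies $A(G)$, hence is continuous). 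Until you supply mechanisms for these two steps, the argument does not go through.
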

If we restrict the $(L^\infty(G),L^1(G))$-duality on $M_d(G)$, we can embed $L^1(G)$ in $M_d(G)^*$. Denote by $X_d(G)$ the completion of $L^1(G)$ in $M_d(G)^*$.  After Theorem \ref{thm1} and Paulsen-Smith's theorem, it is easily seen that $X_d(G)$ is a Banach predual of $M_d(G)$.
\begin{theorem}\label{thm2}
Let $G$ be a locally compact second countable group and let $d$ be an integer greater than one. Then we have $M_d(G)\cong X_d(G)^*$ isometrically.
\end{theorem}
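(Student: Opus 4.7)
The plan is to combine Theorem \ref{thm1} with elementary Banach space duality. Set $T := L^1(G) \otimes_h \cdots \otimes_h L^1(G)$ ($d$ factors) and let $K \subseteq T$ be the closed kernel of the extended convolution map $\conv_d : T \to L^1(G)$. Theorem \ref{thm1} already identifies $M_d(G)$, isometrically, with the annihilator $K^\perp \subseteq T^*$ via $\varphi \mapsto \hat\varphi$; since $K^\perp \cong (T/K)^*$ isometrically by standard Banach space duality, we obtain $M_d(G) \cong (T/K)^*$. Theorem \ref{thm2} therefore reduces to exhibiting an isometric isomorphism $X_d(G) \cong T/K$.

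To this end, I would define a linear map $\iota : L^1(G) \to T/K$ by sending $f$ to the class $[u]$, where $u \in T$ is any preimage of $f$ under $\conv_d$. Such a preimage exists because the Cohen--Hewitt factorization theorem (iterated $d-1$ times) gives $f = f_1 * \cdots * f_d$ with $f_i \in L^1(G)$, and one may take $u = f_1 \otimes \cdots \otimes f_d$; the class $[u]$ is manifestly independent of the choice of $u$, since any two preimages differ by an element of $K$.

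To see that $\iota$ is an isometry, fix $f \in L^1(G)$ and $u \in T$ with $\conv_d(u) = f$. Using the identity $(\varphi, f) = \hat\varphi(u)$ from the ``moreover'' part of Theorem \ref{thm1} (extended from elementary tensors by linearity and continuity), together with the isometric identification $M_d(G) \cong K^\perp$,
\begin{align*}
\|f\|_{X_d(G)} \;=\; \sup_{\|\varphi\|_{M_d} \le 1} |(\varphi, f)| \;=\; \sup_{\psi \in K^\perp,\, \|\psi\| \le 1} |\psi(u)| \;=\; \|[u]\|_{T/K},
\end{align*}
the last equality being the Hahn--Banach description of the quotient norm. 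Furthermore, $\iota(L^1(G))$ is dense in $T/K$: it contains the image in $T/K$ of the (dense) algebraic tensor product $L^1(G) \otimes \cdots \otimes L^1(G)$, since every elementary tensor $f_1 \otimes \cdots \otimes f_d$ maps under the quotient to $\iota(f_1 * \cdots * f_d)$. Hence $\iota$ extends to the desired isometric isomorphism $X_d(G) \cong T/K$, and composition with $M_d(G) \cong (T/K)^*$ finishes the proof. I do not anticipate a genuine obstacle here: the substantive work lives in Theorem \ref{thm1}, and the only technical input beyond pure Banach space generalities is the surjectivity of $\conv_d$ onto $L^1(G)$, which is exactly what Cohen--Hewitt supplies.
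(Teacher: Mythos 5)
Your argument is correct and essentially the paper's own: both routes pass through $M_d(G)\cong K^\perp\cong (T/K)^*$ and then identify $T/K$ with $X_d(G)$ via the convolution map together with Hahn--Banach and density (the paper works with the pre-adjoint $i^*:X_d'(G)/Y\to M_d(G)^*$, which is the inverse of your $\iota$, and gets by with density of $L^1(G)^{*d}$ in $L^1(G)$ where you invoke Cohen--Hewitt). One small correction: $\conv_d$ does \emph{not} extend boundedly to the Haagerup tensor product $T$ --- if it did, Theorem \ref{thm1}(2) would force every bounded continuous function on $G$ into $M_d(G)$ with controlled norm, which fails already for $G=\Z$, $d=2$ --- so $K$ must be taken to be the closure of the kernel of the \emph{algebraic} convolution map (as the paper does with $Y$) and ``preimage of $f$'' must mean an algebraic preimage; this is harmless for your proof, since Cohen--Hewitt hands you an elementary tensor and well-definedness, linearity, and the identity $(\varphi,f)=\hat\varphi(u)$ only ever need algebraic representatives.
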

This duality between $M_d(G)$ and $X_d(G)$ was used in \cite{Ver23,Bat23b} to define $M_d$ type approximation properties. We recall the definitions.
\begin{definition}[$M_d$ type approximation properties]\label{defn Md AP} Let $G$ be a locally compact group and let $d$ be an integer greater than one.

We say that $G$ has \textit{$M_d$-approximation-property} ($M_d$-AP in short) if  there is a net $(\varphi_i)$ of functions in $C_c(G)\cap M_d(G)$ such that $\varphi_i\rightarrow  1$ in $\sigma (M_d(G),X_d(G))$-topology. 

We say that $G$ is \textit{$M_d$-weakly-amenable} ($M_d$-WA in short) if  there is a net $(\varphi_i)$ of functions in $C_c(G)\cap M_d(G)$ such that $\sup_i \|\varphi_i\|_{M_d} \leq C<\infty$ and $\varphi_i\rightarrow 1$ in $\sigma (M_d(G),X_d(G))$-topology. The number $\Lambda_{CH}(G,d) = \inf C$ is called the \textit{$M_d$-Cowling-Haagerup constant}. We conventionally write $\Lambda_{CH}(G,d) = \infty$ if $G$ is not $M_d$-WA. 

We say that $G$ has \textit{$M_d$-weak-Haagerup-property} ($M_d$-WH in short) if  there is a net $(\varphi_i)$ of functions in $C_0(G)\cap M_d(G)$ such that
$\sup_i \|\varphi_i\|_{M_d} \leq C<\infty$ and $\varphi_i\rightarrow 1$ in $\sigma (M_d(G),X_d(G))$-topology. The number $\Lambda_{WH}(G,d) = \inf C$ is called the \textit{$M_d$-weak-Haagerup constant}. We conventionally write $\Lambda_{WH}(G,d) = \infty$ if $G$ does not have $M_d$-WH.

\end{definition}
When $d=2$, these are exactly the usual approximation property, weak amenability, and weak Haagerup property \cite{DCH85,CH89,HK94,Knu16,Haa16}. Allow us to mention a couple of motivations to study $M_d$ type approximation properties in general extent. The first one is related to the Similarity Problem: A group with $M_d$-AP for all $d\in \N$ has an affirmative answer to the Similarity Problem \cite[Theorem 1.2]{Ver23}. The second one is their stability: $M_d$ type approximation properties are stable under von Neumann equivalence \cite{Bat23b}, hence also under Measure equivalence and W*-equivalence. See \cite{IPR19,Ish21,Bat23,Bat23b} for more about von Neumann equivalence.

The following result allows one to relax the convergence conditions in the definition of $M_d$-WA and $M_d$-WH.
\begin{proposition}\label{prop uniform conv}
Let $G$ be a locally compact second countable group and let $d$ be an integer greater than one. Then $G$ is $M_d$-WA if and only if there is a net $(\varphi_i)$ of functions in $C_c(G)\cap M_d(G)$ such that $\sup_i\|\varphi_i\|_{M_d}\leq C< \infty$ and $\varphi_i\rightarrow  1$ uniformly on compact subsets. We have $\inf C = \Lambda_{CH}(G,d)$. Similarly, $G$ has $M_d$-WH  if and only if there is a net $(\varphi_i)$ of functions in $C_0(G)\cap M_d(G)$ such that $\sup_i \|\varphi_i\|_{M_d} \leq C'<\infty$ and $\varphi_i\rightarrow 1$ uniformly on compact subsets. We have $\inf C' = \Lambda_{WH}(G,d)$.
\end{proposition}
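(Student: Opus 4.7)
The plan is to establish both equivalences by handling each direction separately; the argument for WA and WH is identical modulo replacing $C_c$ by $C_0$, and the constant $C$ is preserved throughout so that the infima coincide.

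The easy direction passes from uniform convergence on compact sets to $\sigma(M_d(G),X_d(G))$-convergence within a uniformly bounded family. Suppose $(\varphi_i)$ satisfies $\sup_i\|\varphi_i\|_{M_d}\leq C$ and $\varphi_i\to 1$ uniformly on compact sets. Since $\|\cdot\|_\infty\leq\|\cdot\|_{M_d}$, the $L^\infty$-norms are uniformly bounded by $C$. For $f\in L^1(G)$, split $\int(\varphi_i-1)f$ over a compact $K$ and its complement: the integral over $K$ vanishes by uniform convergence, while the integral over $K^c$ is bounded by $(C+1)\int_{K^c}|f|$, which is small for $K$ large. Hence $\langle\varphi_i-1,f\rangle\to 0$ for every $f\in L^1(G)$. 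For general $\omega\in X_d(G)$, approximate $\omega$ by some $f_n\in L^1(G)$ in the $X_d(G)$-norm and use $\|\varphi_i-1\|_{M_d}\leq C+1$ to transfer convergence from $L^1(G)$ to all of $X_d(G)$.

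For the hard direction, assume $(\varphi_i)$ lies in $C_c(G)\cap M_d(G)$ (resp.\ $C_0(G)\cap M_d(G)$) with $\sup_i\|\varphi_i\|_{M_d}\leq C$ and $\varphi_i\to 1$ in the $\sigma(M_d,X_d)$-topology. Fix a non-negative $\psi\in C_c(G)$ with $\int\psi=1$, and consider the right-convolved net $\tilde\varphi_i(y):=\int\varphi_i(yx)\psi(x)\,dx$. The claim is that $\tilde\varphi_i$ inherits the class $C_c(G)\cap M_d(G)$ (resp.\ $C_0(G)\cap M_d(G)$) with $\|\tilde\varphi_i\|_{M_d}\leq\|\varphi_i\|_{M_d}\|\psi\|_1\leq C$; that $\tilde\varphi_i\to 1$ pointwise, because $\tilde\varphi_i(y)=\int\varphi_i(z)\psi(y^{-1}z)\,dz=\langle\varphi_i,\psi(y^{-1}\cdot)\rangle$ with $\psi(y^{-1}\cdot)\in L^1(G)\subset X_d(G)$ and pointwise limit $\int\psi(y^{-1}z)\,dz=1$; and that $(\tilde\varphi_i)$ is equicontinuous, via $|\tilde\varphi_i(y)-\tilde\varphi_i(y')|\leq C\|\psi-L_{y^{-1}y'}\psi\|_1$ together with continuity of left translation in $L^1(G)$. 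Combining these with the uniform bound $\|\tilde\varphi_i\|_\infty\leq C$ via the Arzel\`a-Ascoli theorem yields $\tilde\varphi_i\to 1$ uniformly on compact subsets, and the bound $\|\tilde\varphi_i\|_{M_d}\leq C$ guarantees that the two infima $\inf C$ coincide.

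The main obstacle is the norm bound $\|\varphi*\psi\|_{M_d}\leq\|\varphi\|_{M_d}\|\psi\|_1$ used above. Given a representation $\varphi(x_1\cdots x_d)=\xi_1(x_1)\cdots\xi_d(x_d)(1)$ from Definition \ref{defn Md}, writing $y=x_1\cdots x_d$ gives $\tilde\varphi(y)=\xi_1(x_1)\cdots\xi_{d-1}(x_{d-1})\tilde\xi_d(x_d)(1)$ where $\tilde\xi_d(x_d)=\int\xi_d(x_dx)\psi(x)\,dx$ is a Hilbert-space-valued convolution satisfying $\|\tilde\xi_d(x_d)\|\leq\|\xi_d\|_\infty\|\psi\|_1$; taking the infimum over representations gives the bound. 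A subtle point is measurability of $\xi_d$ required for the Bochner integral, which can be arranged by a standard regularization or bypassed via the dual description of $M_d(G)$ in Theorem \ref{thm1}, under which convolution by $\psi\in L^1(G)$ on $M_d(G)$ is transposed to the convolution action of $\psi$ on one factor of $L^1(G)\otimes_h\cdots\otimes_h L^1(G)$, which is bounded by $\|\psi\|_1$.
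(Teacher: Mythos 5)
Your proof is correct and follows essentially the same route as the paper: smoothing the net by convolution with a bump function (the paper convolves on the left, via Lemma \ref{lem conv action}, and cites the Haagerup--Knudby argument for local uniform convergence where you spell out the equicontinuity-plus-pointwise-convergence step), and for the converse using uniform boundedness together with density of $L^1(G)$ in $X_d(G)$. The measurability issue you flag for the vector-valued convolution is exactly what the paper's Lemma \ref{lem cont} resolves, by choosing the $\xi_i$ to be WOT-continuous.
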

A key step in proving Theorem \ref{thm1} is Lemma \ref{lem cont} in which we observe that we can choose the maps $\xi_i:G\rightarrow \B(\h_i,\h_{i-1})$ in Definition \ref{defn Md} to be continuous where $\B(\h_i,\h_{i-1})$ is endowed with the weak operator topology. Proposition \ref{prop uniform conv} is a simple consequence of this observation. Since the uniform convergence on compact subsets pass to closed subgroups via the restriction map, the following is immediate.
\begin{corollary}\label{cor consts}
If  $G$ is a locally compact second countable group and $H$ is a closed subgroup of $G$, then we have $\Lambda_{CH}(H,d)\leq \Lambda_{CH}(G,d)$ and $\Lambda_{WH}(H,d) \leq \Lambda_{WH}(G,d)$ for all integers $d\geq 2$.
\end{corollary}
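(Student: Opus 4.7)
The plan is to invoke Proposition \ref{prop uniform conv} on $G$, restrict the resulting net to $H$, verify the three required properties, and then invoke Proposition \ref{prop uniform conv} in the reverse direction on $H$ to conclude. This exploits the fact, established by the proposition, that the $\sigma(M_d,X_d)$-convergence in the definition of these approximation properties can be replaced by uniform convergence on compacta, a notion that is manifestly compatible with restriction.

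Concretely, given $\epsilon>0$, I would use Proposition \ref{prop uniform conv} to select a net $(\varphi_i)\subset C_c(G)\cap M_d(G)$ with $\sup_i\|\varphi_i\|_{M_d}\leq \Lambda_{CH}(G,d)+\epsilon$ and $\varphi_i\to 1$ uniformly on compact subsets of $G$; the weak-Haagerup statement is handled identically with $C_0(G)$ in place of $C_c(G)$ and $\Lambda_{WH}$ in place of $\Lambda_{CH}$. The candidate approximating net on $H$ is then $(\varphi_i|_H)$, and three verifications remain.

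First, membership: since $H$ is closed and carries the subspace topology, the support of $\varphi_i|_H$ equals $\mathrm{supp}(\varphi_i)\cap H$, a closed subset of a compact subset of $G$, hence compact in $H$; the analogous argument with level sets $\{x\in H:|\varphi_i(x)|\geq \delta\}$ yields $\varphi_i|_H\in C_0(H)$ in the weak-Haagerup setting. Second, norm control: from Definition \ref{defn Md}, if $\xi_j:G\to\B(\h_j,\h_{j-1})$ realize $\varphi_i$, then $\xi_j|_H$ realize $\varphi_i|_H$ with the same product of operator sup-norms, so $\|\varphi_i|_H\|_{M_d(H)}\leq \|\varphi_i\|_{M_d(G)}$, and the restricted net inherits the bound $\Lambda_{CH}(G,d)+\epsilon$. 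Third, convergence: every compact subset of $H$ is compact in $G$, so uniform convergence on compacta passes trivially to $H$.

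Applying the converse direction of Proposition \ref{prop uniform conv} on $H$ then yields $\Lambda_{CH}(H,d)\leq \Lambda_{CH}(G,d)+\epsilon$, and letting $\epsilon\downarrow 0$ completes the argument; the weak-Haagerup inequality is identical. I do not expect any real obstacle here — the genuine content was already absorbed into Proposition \ref{prop uniform conv} (and, through it, into Lemma \ref{lem cont} on the continuity of the $\xi_i$), which is precisely why the author flags the corollary as immediate.
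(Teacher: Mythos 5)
Your proof is correct and follows exactly the route the paper intends: the paper gives no separate proof of Corollary \ref{cor consts}, declaring it immediate from Proposition \ref{prop uniform conv} precisely because uniform convergence on compacta passes to closed subgroups under restriction. Your three verifications (compact support/vanishing at infinity of the restriction, the norm bound $\|\varphi_i|_H\|_{M_d(H)}\leq\|\varphi_i\|_{M_d(G)}$ via restricting the $\xi_j$, and convergence on compacta of $H$) are all sound and simply make the paper's one-line remark explicit.
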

As shown in \cite[Theorem 3]{Bat23}, the converse inequalities hold true when $H$ is a lattice of $G$, hence the following.
\begin{theorem}
If $G$ is a locally compact second countable group and $\Gamma$ is a lattice of $G$, then we have $\Lambda_{CH}(\Gamma,d)=\Lambda_{CH}(G,d)$ and $\Lambda_{WH}(\Gamma,d)=\Lambda_{WH}(G,d)$ for all integers $d\geq 2$.
\end{theorem}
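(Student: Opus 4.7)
The plan is to split each equality into two inequalities, one coming from Corollary \ref{cor consts} and the other from \cite[Theorem 3]{Bat23}.

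First, since a lattice $\Gamma$ in $G$ is in particular a discrete, hence closed, subgroup, Corollary \ref{cor consts} immediately gives $\Lambda_{CH}(\Gamma,d)\leq \Lambda_{CH}(G,d)$ and $\Lambda_{WH}(\Gamma,d)\leq \Lambda_{WH}(G,d)$. The task therefore reduces to the reverse inequalities.

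For those, my plan is to invoke \cite[Theorem 3]{Bat23}, which provides, starting from any approximating net $(\psi_i)$ in $C_c(\Gamma)\cap M_d(\Gamma)$ (respectively $C_0(\Gamma)\cap M_d(\Gamma)$) with $\sup_i\|\psi_i\|_{M_d(\Gamma)}\leq C$ and $\psi_i\to 1$, a corresponding approximating net on $G$ with the same $M_d$-bound and the same support type. The construction there is an induction-type procedure specific to lattices, using a relatively compact fundamental domain and a Bruhat cutoff to extend each $\psi_i$ to $G$. Crucially, Proposition \ref{prop uniform conv} lets one formulate both the hypothesis and the conclusion in terms of uniform convergence on compact sets instead of the $\sigma(M_d,X_d)$-topology, and this weaker form of convergence passes cleanly through the induction because compact subsets of $G$ meet only finitely many $\Gamma$-translates of the fundamental domain.

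The main obstacle, which is handled as a black box through \cite[Theorem 3]{Bat23}, is arranging the extension so as to respect the factorization structure of Definition \ref{defn Md} rather than just the supremum norm; this is what ensures $\|\tilde\psi_i\|_{M_d(G)}\leq\|\psi_i\|_{M_d(\Gamma)}$ and hence gives the desired bound on the Cowling--Haagerup and weak-Haagerup constants. Taking the infimum over admissible $C$ on both sides and combining with the easy direction closes the proof of both equalities.
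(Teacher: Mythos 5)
Your proposal is correct and follows exactly the paper's argument: the inequality $\Lambda(\Gamma,d)\leq\Lambda(G,d)$ comes from Corollary \ref{cor consts} applied to the closed subgroup $\Gamma$, and the reverse inequality is quoted directly from \cite[Theorem 3]{Bat23}. The paper treats that cited result as a black box just as you do, so your additional speculation about its internal mechanism is harmless but not needed.
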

The argument used to prove Corollary \ref{cor consts} does not work for $M_d$-AP due to the lack of uniform norm boundedness. However, if we assume $H$ is a lattice in $G$, there is an easier approach to obtain a similar results.

\begin{theorem}\label{thm AP}
If $G$ is a locally compact second countable group and $\Gamma$ is a lattice of $G$, then $G$ has  $M_d$-AP if and only if $\Gamma$ has $M_d$-AP.
\end{theorem}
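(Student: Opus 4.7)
The plan is, in each direction of the equivalence, to exhibit a weak*-continuous linear map between $M_d(G)$ and $M_d(\Gamma)$ that sends the constant function $1$ to $1$ and compactly supported multipliers to compactly supported ones; via the duality $M_d(\cdot)\cong X_d(\cdot)^*$ of Theorem~\ref{thm2}, this immediately transfers the $M_d$-approximation property between $G$ and $\Gamma$. Weak*-continuity will be certified in each case by explicitly constructing a pre-adjoint between $X_d(\Gamma)$ and $X_d(G)$, so that boundedness of the $M_d$-map passes to its pre-adjoint by standard duality.

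For $G\Rightarrow \Gamma$, fix $H\in C_c(G)^+$ with $\|H\|_1=1$ and define $\iota(\varphi)(\gamma):=\int_G \varphi(\gamma u)H(u)\,du$, with candidate pre-adjoint $\iota_*:\ell^1(\Gamma)\to L^1(G)$ given by $\iota_*(\delta_\gamma):=\lambda_\gamma H$. Absorbing $H$ into the last factor of an $M_d$-decomposition $\varphi(x_1\cdots x_d)=\xi_1(x_1)\cdots\xi_d(x_d)(1)$ yields $\iota(\varphi)(\gamma_1\cdots\gamma_d)=\xi_1(\gamma_1)\cdots\xi_{d-1}(\gamma_{d-1})\widetilde\xi_d(\gamma_d)(1)$ with $\widetilde\xi_d(\gamma)=\int_G \xi_d(\gamma u)H(u)\,du$ of norm at most $\|H\|_1\|\xi_d\|_\infty$, giving $\|\iota(\varphi)\|_{M_d(\Gamma)}\leq \|\varphi\|_{M_d(G)}$. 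Boundedness of $\iota$ on $M_d$ then extends $\iota_*$ to a bounded map $X_d(\Gamma)\to X_d(G)$ by duality, so $\iota$ is weak*-continuous. Since $\iota(1_G)=1_\Gamma$ and $\iota(\varphi)$ is supported in the finite set $\Gamma\cap \mathrm{supp}(\varphi)\cdot\mathrm{supp}(H)^{-1}$ for $\varphi\in C_c(G)$, a net $\varphi_i\in C_c(G)\cap M_d(G)$ converging weak* to $1_G$ produces $\iota(\varphi_i)\in C_c(\Gamma)\cap M_d(\Gamma)$ converging weak* to $1_\Gamma$.

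For $\Gamma\Rightarrow G$ the plan is to produce a symmetric weak*-continuous map $\kappa:M_d(\Gamma)\to M_d(G)$ with $\kappa(1_\Gamma)=1_G$ sending finitely supported multipliers to compactly supported ones. Choosing $H$ to be a Bruhat function (that is, $\sum_{\gamma\in\Gamma} H(\gamma^{-1}g)=1$ for a.e.~$g$, with $\|H\|_1=1$, which exists because $\Gamma$ is a lattice), the natural candidate is $\kappa(\psi)(g):=\sum_{\gamma\in\Gamma}H(\gamma^{-1}g)\psi(\gamma)$ with candidate pre-adjoint $\kappa_*(f)(\gamma):=\int_G f(g)H(\gamma^{-1}g)\,dg$; both are contractive in the $L^1/\ell^1$ norms, $\kappa(1_\Gamma)=1_G$, and $\kappa$ preserves compact support. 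The main obstacle is showing that $\kappa$ is bounded from $M_d(\Gamma)$ to $M_d(G)$, equivalently that $\kappa_*$ extends boundedly to $X_d(G)\to X_d(\Gamma)$: unlike in the forward direction, an $M_d$-decomposition of $\psi$ on $\Gamma$ does not lift to $\kappa(\psi)$ on $G$ merely by absorbing $H$, because the $\Gamma$-coordinate map $g\mapsto\gamma(g)$ attached to a Borel fundamental domain $F$ is not multiplicative. To overcome this I would lift the $\Gamma$-valued scalar maps $\eta_i$ not to scalar maps on $G$ but to operator-valued maps into $\B(\h_i\otimes L^2(F),\h_{i-1}\otimes L^2(F))$, using the unitary identification $L^2(G)\cong L^2(F)\otimes\ell^2(\Gamma)$ to intertwine the discontinuous $\Gamma$-coordinate with continuous $G$-translation, and then reinsert $H$ to restore continuity; this lattice-specific construction is the ``easier approach'' alluded to in the introduction. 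Once $\|\kappa(\psi)\|_{M_d(G)}\leq \|\psi\|_{M_d(\Gamma)}$ is established, weak*-continuity follows as before and the converse conclusion is exactly parallel to the forward one.
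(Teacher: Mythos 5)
Your $G\Rightarrow\Gamma$ direction is essentially the paper's argument and is correct. The paper averages over a finite-measure fundamental domain, $i(\varphi)(\gamma)=\int_\Omega\varphi(\gamma w)\,dw$, absorbs the averaging into $\xi_d$ exactly as you do, and certifies weak*-continuity by checking that the pre-adjoint sends $\ell^1(\Gamma)$ into $L^1(G)$. Your variant with a compactly supported kernel $H$ is a mild improvement in one respect: it lands $\iota(\varphi)$ in $C_c(\Gamma)$ directly for $\varphi\in C_c(G)$, whereas the paper's $i(\varphi)$ need not be finitely supported (since $\Omega$ need not be relatively compact), so the paper instead shows $i(\varphi)\in\ell^2(\Gamma)\subseteq A(\Gamma)$ and invokes a relaxed formulation of $M_d$-AP from \cite[Definition 2.2 and Remark 2.3]{Bat23b}. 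Both routes work, and both rest on Lemma \ref{lem cont} to make the vector-valued integral defining $\widetilde\xi_d$ meaningful.

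The converse direction $\Gamma\Rightarrow G$ is where your proposal has a genuine gap. You correctly identify the crux --- boundedness of the induction map $\kappa:M_d(\Gamma)\to M_d(G)$, equivalently that $\kappa_*$ extends to $X_d(G)\to X_d(\Gamma)$ --- but the one-sentence plan of lifting the $\eta_i$ to $\B(\h_i\otimes L^2(F),\h_{i-1}\otimes L^2(F))$ does not yet constitute a proof. The difficulty you name (non-multiplicativity of the $\Gamma$-coordinate $g\mapsto\gamma(g)$) is exactly what must be resolved by a careful cocycle/intertwining computation showing that the lifted maps compose to reproduce $\kappa(\psi)(x_1\cdots x_d)$ with scalar endpoints, and one must further check continuity of the resulting function on $G$ and compatibility with the weak* topologies; none of this is carried out. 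Note that the paper does not prove this direction either: it disposes of the ``if'' part by citing \cite[Theorem 1.4]{Bat23b} (and the related \cite[Theorem 3]{Bat23}, where precisely the tensoring-with-$L^2(\Omega)$ induction you sketch is executed for the norm-bounded properties $M_d$-WA and $M_d$-WH). So your forward direction stands on its own, but for the converse you should either complete the induction construction in detail or, as the paper does, quote the existing result.
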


We acknowledge that this work is strongly inspired by \cite{Haa16} and \cite{Pis05} where the case $d=2$ and the discrete case are studied.
\section{Preliminary}
Throughout the paper, $G$ is a locally compact second countable group, $G_\mathrm{d}$ is the discrete realization of $G$, $d\in \N$ is an integer greater than one, and  $\h_i$, $\h$, and $\mathcal{K}$ are Hilbert spaces. We conventionally write $\h_0 = \h_d = \C$ and $v_0 = v_d = 1$.

\subsection{Fourier and Fourier-Stieltjes algebras}
Recall that a \textit{unitary representation} of $G$ is a continuous group homomorphism $\pi:G\rightarrow \mathcal{U}(\h)$, where $\h$ is a Hilbert space and $\mathcal{U}(\h)$ is the group of all unitary operators on $\h$ endowed with the weak operator topology (WOT). Given vectors $v,w\in\h$, we call the function 
\begin{align}\label{coef}
\varphi = \pi_{v,w}: g\in G\mapsto \langle\pi(x)v,w\rangle
\end{align} the \textit{coefficient} of $\pi$ associated to $v,w\in \h$. These are typical elements of $M_d(G)$. To see that,
put  $\h_1 =\dots =\h_{d-1} = \h$,   $\xi_1 (x)= \langle \pi(x)\cdot , w\rangle$, $\xi_d(x): 1\mapsto \pi(x)v$, and $\xi_2(x)=\dots= \xi_{d-1}(x) = \pi(x)$. Then we have
\begin{align*}
\xi_1 (x_1)\cdots \xi_d(x_d)(1) = \langle \pi(x_1\cdots x_d)v,w\rangle = \pi_{v,w}(x_1\cdots x_d)
\end{align*} with $\|\pi_{v,w}\|_{M_d}\leq \|v\|\|w\|$.

The \textit{Fourier-Stieltjes algebra} $B(G)$ consist of all coefficients of unitary representations of $G$. Endowed with the  pointwise product and the norm $\|\varphi\|_B = \inf \|v\|\|w\|$, where the infimum is taken over all $(\h,\pi,v,w)$ satisfying \eqref{coef}, the Fourier-Stieltjes algebra is a commutative Banach algebra.

The \textit{Fourier algebra} $A(G)$ is the smallest closed subalgebra of $B(G)$ containing $B(G)\cap C_c(G)$. Some basic facts about the Fourier algebra include that it consists of the coefficients of the left regular representation $\lambda:G\rightarrow \mathcal{U}(L^2(G))$ and that if $\varphi\in A(G)$ then we can find $f,g\in L^2(G)$ such that
$\varphi = \lambda_{f,g}$ and $\|\varphi\|_B = \|f\|_2\|g\|_2$. To distinguish that $\varphi$ is in the Fourier algebra, we write $\|\varphi\|_A = \|\varphi\|_B$. We refer the readers to \cite{Eym64,KL18,Run20} for more about these algebras.

The Fourier algebra contains abundant continuous functions so that when a function $\varphi$ multiplies the Fourier algebra, i.e. $\varphi A(G)\subseteq A(G)$, it is necessarily continuous. We will make use of this fact. The following lemma is comparable to the well known fact \cite[Theorem V.7.3]{Gaa73} that every weakly measurable unitary representation on a separable Hilbert space is continuous.
\begin{lemma}\label{lem meas cont}
Let  $A,B:G\rightarrow \h$ be two bounded weakly measurable maps (i.e. $\h$ is endowed with $\sigma(\h,\h^*)$-Borel space structure) into a separable Hilbert space $\h$. Assume that $\varphi: G\rightarrow \C$ is a function such that
$\varphi (y x^{-1}) = \langle A(x),B(y)\rangle$ for all $x,y\in G$. Then $\varphi$ multiplies the Fourier algebra $A(G)$. In particular, $\varphi$ is continuous.
\end{lemma}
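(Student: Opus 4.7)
The plan is to show directly that $\varphi$ is a bounded multiplier of the Fourier algebra $A(G)$, after which continuity follows for free from the regularity of $A(G)$.

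First, I would invoke the Pettis measurability theorem: since $\h$ is separable, weak measurability of $A$ and $B$ upgrades to strong (Bochner) measurability, so for any $\xi,\eta\in L^2(G)$ the pointwise products $F(s):=\xi(s)A(s)$ and $H(s):=\eta(s)B(s)$ lie in $L^2(G;\h)\cong L^2(G)\otimes\h$. Specializing the hypothesis at $x=e$ also gives the a priori bound $|\varphi(g)|=|\langle A(e),B(g)\rangle|\leq \|A\|_\infty\|B\|_\infty$, so $\varphi$ is bounded on $G$.

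The heart of the argument is the following calculation. Every $f\in A(G)$ is a coefficient of $\lambda$, i.e.\ $f(g)=\langle\lambda(g)\xi,\eta\rangle=\int_G \xi(s)\overline{\eta(gs)}\,ds$ with $\|\xi\|_2\|\eta\|_2$ arbitrarily close to $\|f\|_A$. The trick is to rewrite the hypothesis as $\varphi(g)=\langle A(s),B(gs)\rangle_\h$ for every $s\in G$ (substitution $y=gs$, $x=s$), so that $\varphi$ slips inside the integral:
\begin{align*}
\varphi(g)f(g)
=\int_G \langle \xi(s)A(s),\eta(gs)B(gs)\rangle_\h\, ds
=\int_G \langle F(s),H(gs)\rangle_\h\, ds
=\langle (\lambda(g)\otimes\id_\h) F, H\rangle_{L^2(G;\h)}.
\end{align*}
Expanding $F$ and $H$ in an orthonormal basis $(e_n)$ of $\h$ displays $\varphi f$ as the sum $\sum_n \lambda_{F_n,H_n}$ of coefficients of $\lambda$, and Cauchy--Schwarz shows this converges absolutely in $A(G)$ with $\|\varphi f\|_A\leq \|F\|_2\|H\|_2\leq \|A\|_\infty\|B\|_\infty\|\xi\|_2\|\eta\|_2$. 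Taking the infimum over such representations of $f$ yields $\|\varphi f\|_A\leq \|A\|_\infty\|B\|_\infty\|f\|_A$, so $\varphi$ multiplies $A(G)$.

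Finally, continuity is extracted from Eymard's theorem that $A(G)$ is a regular commutative Banach algebra with spectrum $G$: given $g_0\in G$, one picks $f\in A(G)\cap C_c(G)$ with $f\equiv 1$ on a neighborhood $U$ of $g_0$; then $\varphi=\varphi f$ on $U$ and $\varphi f\in A(G)\subseteq C_0(G)$, so $\varphi$ is continuous at $g_0$. The main obstacle is the first manipulation: the hypothesis is written in the slightly awkward form $\varphi(yx^{-1})$, and one has to spot that the reparametrization $y=gs$, $x=s$ is exactly what aligns the Hilbert-space inner product with the convolution-like integral defining the coefficient $f=\langle\lambda(\cdot)\xi,\eta\rangle$.
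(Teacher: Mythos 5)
Your proof is correct and follows essentially the same route as the paper: reparametrize the hypothesis so that $\varphi$ slips inside the coefficient integral, expand $A$ and $B$ in an orthonormal basis to exhibit $\varphi f$ as an absolutely convergent (by Cauchy--Schwarz) sum of coefficients of $\lambda$, and deduce continuity from the multiplier property. Your packaging of the sum as a single coefficient of $\lambda\otimes\id_{\h}$ on $L^2(G;\h)$ is a tidy reformulation of the same computation, not a different argument.
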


\begin{proof}
Let $(e_n)_{n\in \N}$ be an orthonormal basis for $\h$. By assumption, there exist uniformly bounded measurable functions $a_n,b_n\in L^\infty (G)$ such that
$A(x) = \sum_n a_n(x) e_n$, $B(y) = \sum_n b_n(y) e_n$, and
\begin{align*}
\varphi(yx^{-1}) = \langle A(x),B(y)\rangle= \sum_n a_n (x)\overline{b_n(y)}.
\end{align*}
Take any $f,g\in L^2(G)$. By Cauchy–Schwarz inequality, we have 
\begin{align*}
\sum_n \|a_nf\|_2 \|b_n g\|_2 &\leq 
(\sum_n \|a_nf\|_2^2) ^{1/2}(\sum_n  \|b_n g\|_2^2)^{1/2} 
\\
& = (\int_G |f(x)|^2 \sum_n |a_n(x)|^2 dx)^{1/2} (\int_G |g(x)|^2 \sum_n |b_n(x)|^2 dx)^{1/2}
\\
&\leq \|A\|_\infty \|B\|_\infty \|f\|_2\|g\|_2 <\infty,
\end{align*} thus $\sum_n\lambda_{a_nf,b_ng}\in A(G)$. On the other hand, if we fix $x\in G$,
since the bounding function
\begin{align*}
|f(x^{-1}y)\overline{g(y)} \sum_{n=1}^N a_n (x^{-1}y)\overline{b_n(y)}| \leq 
|f(x^{-1}y)\overline{g(y)}|  \|A\|_\infty\|B\|_\infty 
\end{align*} is integrable over $y\in G$,
by the dominated convergence theorem we have
\begin{align*}
(\varphi\lambda_{f,g})(x) &= \int_G \varphi(x) f(x^{-1}y)\overline{g(y)}dy =  \int_G \langle A(x^{-1}y),B(y) \rangle f(x^{-1}y)\overline{g(y)}dy 
\\
&= \int_G \sum_n a_n (x^{-1}y)\overline{b_n(y)}f(x^{-1}y)\overline{g(y)}dy = (\sum_n\lambda_{a_nf,b_ng})(x).
\end{align*}
In other words, $\varphi$ is a Fourier multiplier, hence continuous.
\end{proof} 
A similar statement can be achieved for any $M_d$-multipliers. More precisely, if $\varphi\in M_d(G_\mathrm{d})$ and the $\xi_i$'s in \eqref{eq eq} are weakly measurable, then we can deduce that $\varphi\in M_d(G)$. This will be used  to see that $\check{\phi}$ in Theorem \ref{thm1} is continuous.

\subsection{The space $M_d(G)$ of $M_d$-multipliers}
The key step in proving Theorem \ref{thm1}.(i) is to show that we can choose $\xi_i$'s in \eqref{eq eq} to be WOT-continuous, i.e. the maps $x\in G \mapsto \langle \xi_i(x)v_i,v_{i-1} \rangle$ are continuous for any $v_i\in \h_i$ and $v_{i-1}\in \h_{i-1}$. The case $d=2$ is done in \cite[Theorem 3.2]{Haa16}. We adapt it to the general case. Beforehand, let us denote
\begin{align*}
A_i = \{\xi_{i+1}(x_{i+1})\cdots \xi_d (x_d)v_d \in \h_i\mid x_{i+1},\dots ,x_d\in G\},\quad i=0,\dots,d-1,
\end{align*} and 
\begin{align*}
B_i = \{[\xi_1 (x_1)\cdots \xi_i (x_i)]^* v_0 \in\h_i\mid x_1,\dots ,x_{i-1}\in G\},\quad i=1,\dots,d.
\end{align*} 

\begin{lemma}\label{lem total}
If $\varphi\in M_d(G)$ is non-zero, we can choose $\h_i$'s and $\xi_i$'s in \eqref{eq eq} such that $A_i$ and $B_i$ are total in $\h_i$ for all $i=1,\dots, d-1$.
\end{lemma}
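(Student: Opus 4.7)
My plan is a two-step reduction of the Hilbert spaces, first to force the totality of the $A_i$'s, then to force the totality of the $B_i$'s without destroying the first property. At each step I must preserve the composition formula \eqref{eq eq} and cannot let the product of sup-norms increase.

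\textbf{Step 1.} Starting from any representation of $\varphi$ as in \eqref{eq eq}, I would set $\mathcal{K}_i := \overline{\spn}\, A_i \subset \h_i$ for $i = 1, \dots, d-1$, with $\mathcal{K}_0 = \mathcal{K}_d = \C$. Since every vector in $A_i$ arises as $\xi_{i+1}(x)$ applied to a vector in $A_{i+1}$, one has $\xi_{i+1}(x)(\mathcal{K}_{i+1}) \subset \mathcal{K}_i$. Thus the restrictions $\tilde{\xi}_i(x) := \xi_i(x)|_{\mathcal{K}_i} \colon \mathcal{K}_i \to \mathcal{K}_{i-1}$ are well-defined, reproduce $\varphi$ via \eqref{eq eq}, satisfy $\|\tilde{\xi}_i\|_\infty \le \|\xi_i\|_\infty$, and tautologically make $A_i$ dense in $\mathcal{K}_i$.

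\textbf{Step 2.} Working with these $\tilde{\xi}_j$'s, I would form $\tilde{B}_i$ and set $\mathcal{L}_i := \overline{\spn}\, \tilde{B}_i \subset \mathcal{K}_i$, with $\mathcal{L}_0 = \mathcal{L}_d = \C$. The key observation is
\[
\tilde{\xi}_i(x)^* [\tilde{\xi}_1(x_1)\cdots \tilde{\xi}_{i-1}(x_{i-1})]^* v_0 = [\tilde{\xi}_1(x_1)\cdots \tilde{\xi}_{i-1}(x_{i-1})\, \tilde{\xi}_i(x)]^* v_0 \in \tilde{B}_i,
\]
which gives $\tilde{\xi}_i(x)^* (\mathcal{L}_{i-1}) \subset \mathcal{L}_i$, equivalently the absorption identity $P_{\mathcal{L}_{i-1}} \tilde{\xi}_i(x) = P_{\mathcal{L}_{i-1}} \tilde{\xi}_i(x) P_{\mathcal{L}_i}$. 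I would then define $\hat{\xi}_i(x) := P_{\mathcal{L}_{i-1}} \tilde{\xi}_i(x)|_{\mathcal{L}_i} \colon \mathcal{L}_i \to \mathcal{L}_{i-1}$, observe $\|\hat{\xi}_i\|_\infty \le \|\tilde{\xi}_i\|_\infty$, and note that $\hat{\xi}_i(x)^*$ is the restriction $\tilde{\xi}_i(x)^*|_{\mathcal{L}_{i-1}}$. Consequently, the $B_i$-set built from the $\hat{\xi}_j$'s coincides with $\tilde{B}_i$ and is dense in $\mathcal{L}_i$ by construction.

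\textbf{Verification and main obstacle.} Expanding $\hat{\xi}_1(x_1)\cdots \hat{\xi}_d(x_d)(1) = P_{\mathcal{L}_0} \tilde{\xi}_1(x_1) P_{\mathcal{L}_1} \cdots P_{\mathcal{L}_{d-1}} \tilde{\xi}_d(x_d)(1)$ and iterating the absorption identity from left to right eliminates every intermediate $P_{\mathcal{L}_i}$, recovering $\tilde{\xi}_1(x_1)\cdots \tilde{\xi}_d(x_d)(1) = \varphi(x_1\cdots x_d)$. The same telescoping applied only to the tail yields $\hat{\xi}_{i+1}(x_{i+1})\cdots \hat{\xi}_d(x_d)(1) = P_{\mathcal{L}_i}(\tilde{\xi}_{i+1}(x_{i+1})\cdots \tilde{\xi}_d(x_d)(1))$, so the new $A_i$-set is exactly $P_{\mathcal{L}_i}(\tilde{A}_i)$, and density transfers because $\tilde{A}_i$ is dense in $\mathcal{K}_i$ and $P_{\mathcal{L}_i}$ maps $\mathcal{K}_i$ onto $\mathcal{L}_i$. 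The main subtlety I anticipate is precisely this interplay between the two reductions: Step 2 shrinks each $\mathcal{K}_i$ to $\mathcal{L}_i$, and one must confirm that this shrinkage neither alters $\varphi$ nor destroys the density secured in Step 1. The absorption identity handles both at once, and it rests on the asymmetric fact that $\tilde{B}_i$ is closed under the left adjoint action of the $\tilde{\xi}_j(x)$'s.
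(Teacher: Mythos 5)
Your proof is correct and follows essentially the same two-step reduction as the paper: first compress to the closed spans of the $A_i$'s, then to those of the $B_i$'s, using the absorption identity $P_{\mathcal{L}_{i-1}}\tilde{\xi}_i(x)P_{\mathcal{L}_i}=P_{\mathcal{L}_{i-1}}\tilde{\xi}_i(x)$ (the paper's \eqref{eq remove Q}) to preserve both the factorization of $\varphi$ and the totality of the $A_i$'s. The only cosmetic differences are that you phrase the second step as restriction to subspaces rather than multiplication by projections, and your telescoping is cleanest read from the right end inward (re-inserting $P_{\mathcal{L}_{i-1}}$ before absorbing $P_{\mathcal{L}_i}$ if one insists on going left to right), where the paper uses an equivalent reverse recursion.
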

\begin{proof} Let $P_i\in \B(\h_i)$ be the projection onto the closed linear span of $A_i$. Note that $P_0$ is just the identity map because $\varphi$ is nonzero and $\h_0 \cong \C$. It is easily checked that the Hilbert spaces $P_i\h_i$ and the bounded maps $P_{i-1}\xi_i$ also satisfy \eqref{eq eq}. Thus replacing $(\h_i,\xi_i)$'s by $(P_i\h_i,P_{i-1}\xi_i)$'s, we can and will assume that each $A_i$ is total in $\h_i$. 

Let $Q_i \in\B(\h_i)$ be the projection onto the closed linear span of $B_i$. Again, $Q_d$ is just the identity map on $\h_d$ because firstly
\begin{align*}
\varphi(x_1\cdots x_d) = \langle v_d, [\xi_1 (x_1)\cdots \xi_d (x_d)]^* v_0\rangle
\end{align*} is nonzero for some $x_1,\dots , x_d \in G$ and secondly $\h_d\cong\C$. Put $\h_i' = Q_i\h_i$ and $\xi_i' (x_i) = Q_{i-1} \xi_i(x_i)$  for all $i=1,\dots ,d$ with $Q_0 = \id_{\h_0}$. Then $(\h_i',\xi_i')$'s still satisfy \eqref{eq eq}. To see that, take any vectors $v_i' = [\xi_1(x_1)\cdots \xi_i(x_i)]^*v_0 \in\h_i'$ and $v_{i+1}\in \h_{i+1}'$. Then we have
\begin{align*}
\langle Q_i\xi_{i+1}(x_{i+1})Q_{i+1} v_{i+1}',v_i'\rangle &= 
\langle v_{i+1}',Q_{i+1} [\xi_1(x_1)\cdots \xi_i(x_{i+1})]^*v_i'\rangle
\\ 
&=\langle v_{i+1}',[\xi_1(x_1)\cdots \xi_i(x_{i+1})]^*v_i'\rangle
\\
&=\langle Q_i\xi_{i+1}(x_{i+1})v_{i+1}',v_i'\rangle.
\end{align*} Since such $v_i'$'s are total in $\h_i'$, the above equality is true for any $v_i'\in \h_i'$. In other words, we have 
\begin{align}\label{eq remove Q}
Q_i\xi_{i+1}(x_{i+1}) Q_{i+1} = Q_i\xi_{i+1}(x_{i+1}).
\end{align} From this, it is easily seen that
\begin{align*}
 \xi_1'(x_1)\cdots \xi_d'(x_d)  v_d&= 
 Q_0\xi_1(x_1)Q_1\xi_2(x_2) Q_2\cdots Q_{d-1}\xi_d(x_d)  v_d
 \\
 &= 
 \xi_1(x_1)\cdots \xi_d(x_d) v_d = \varphi(x_1\cdots x_d).
\end{align*}
Moreover, the set
\begin{align*}
B_i'= \{[\xi_1' (x_1)\cdots \xi_i' (x_i)]^* v_0 \in\h_i\mid x_1,\dots ,x_{i-1}\in G\}
\end{align*} is total in $\h_i'$. Now, the main concern is whether 
\begin{align*}
A_i' = \{\xi_{i+1}'(x_{i+1})\cdots \xi_d' (x_d)v_d \in \h_i\mid x_{i+1},\dots ,x_d\in G\}
\end{align*} is total in $\h_i'$. Let us prove it by reverse recursion. Firstly,
\begin{align*}
A_{d-1}' = \{Q_{d-1}\xi_d (x_d)v_d \in \h_i\mid x_d\in G\}
\end{align*} is total in $\h_{d-1}' = Q_{d-1} \h_{i-1}$ because $A_{d-1}$ is total in $\h_{d-1}$ by assumption. Assume that $A_{i+1}'$ is total in $\h_{i+1}' = Q_{i+1} \h_{i+1}$. Then by \eqref{eq remove Q} and the totality condition of $A_i$'s, we have
\begin{align*}
\overline{\spn} A_i' 
& = \overline{\spn} \bigcup_{x_{i+1}\in G} \xi_{i+1}' (x_{i+1})A_{i+1}' v_d = \overline{\spn} \bigcup_{x_{i+1}\in G} Q_i \xi_{i+1} (x_{i+1})Q_{i+1}\h_{i+1}
\\
& = \overline{\spn} \bigcup_{x_{i+1}\in G} Q_i \xi_{i+1} (x_{i+1})\h_{i+1} = \overline{\spn} Q_i A_i = Q_i \h_i = \h_i', 
\end{align*} showing that $A_i'$ is total in $\h_i'$. This proves the lemma.
\end{proof}
\begin{lemma}\label{lem cont}
Let $\varphi\in M_d(G)$ be non-zero and let $\h_i$'s and $\xi_i$'s be as in Lemma \ref{lem total}. Then $\xi_i$'s are WOT-continuous and $\h_i$'s are separable.
\end{lemma}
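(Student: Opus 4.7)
The plan is to use the totality conditions from Lemma~\ref{lem total} to realize selected matrix coefficients of the $\xi_i$ as shifted values of the continuous function $\varphi$, and then bootstrap: first to arbitrary matrix coefficients (giving WOT-continuity of each $\xi_i$), and then, via Mazur's theorem, to norm-separability of each $\h_i$.

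First I would fix $i\in\{1,\dots,d\}$ and record that for $v=\xi_{i+1}(x_{i+1})\cdots \xi_d(x_d)v_d\in A_i$ and $w=[\xi_1(y_1)\cdots \xi_{i-1}(y_{i-1})]^*v_0\in B_{i-1}$,
\[
\langle \xi_i(x)v,w\rangle \;=\; \varphi\bigl(y_1\cdots y_{i-1}\,x\,x_{i+1}\cdots x_d\bigr),
\]
which is continuous in $x\in G$ because $\varphi\in C(G)$ and the map $x\mapsto y_1\cdots y_{i-1}\,x\,x_{i+1}\cdots x_d$ is continuous. Bilinearity extends this to $(v,w)\in \spn A_i\times \spn B_{i-1}$. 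For general $v\in\h_i$ and $w\in\h_{i-1}$, totality of $A_i$ and $B_{i-1}$ furnishes norm approximations $v_n$ and $w_n$, and the estimate
\[
|\langle \xi_i(x)v,w\rangle-\langle \xi_i(x)v_n,w_n\rangle|\;\le\; \|\xi_i\|_\infty\bigl(\|v-v_n\|\,\|w\|+\|v_n\|\,\|w-w_n\|\bigr)
\]
is uniform in $x$. Hence $x\mapsto\langle\xi_i(x)v,w\rangle$ is a uniform limit of continuous functions on $G$ and so continuous, yielding WOT-continuity of $\xi_i$.

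For separability I would apply the same mechanism to the map $\Psi_i\colon G^{d-i}\to\h_i$, $(x_{i+1},\dots,x_d)\mapsto \xi_{i+1}(x_{i+1})\cdots \xi_d(x_d)v_d$. For $w=[\xi_1(y_1)\cdots \xi_i(y_i)]^*v_0\in B_i$ the inner product $\langle \Psi_i(x_{i+1},\dots,x_d),w\rangle$ equals $\varphi(y_1\cdots y_i\,x_{i+1}\cdots x_d)$, jointly continuous in the $x_j$; uniform approximation as before extends this to every $w\in\h_i$, so $\Psi_i$ is weakly continuous. Since $G$ is second countable, $G^{d-i}$ is second countable and metrizable; pick a countable dense $D\subseteq G$, so $D^{d-i}$ is countable and sequentially dense in $G^{d-i}$. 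For any tuple in $G^{d-i}$, weak continuity exhibits $\Psi_i$ of that tuple as a weak limit of a sequence in $\Psi_i(D^{d-i})$; Mazur's theorem then deposits it in the norm closure of the convex hull of $\Psi_i(D^{d-i})$, which is norm-separable. Thus $A_i=\Psi_i(G^{d-i})$ lies in a separable subset of $\h_i$, and since $A_i$ is total, $\h_i$ is separable.

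The delicate step will be the separability half: weak continuity of $\Psi_i$ alone does not give a norm-separable image. Mazur's theorem, combined with the metrizability of $G^{d-i}$ (which reduces weak closure to sequential weak closure via the countable dense set $D^{d-i}$), is what makes the passage work. By contrast, the WOT-continuity half is a routine density argument once the totality of $A_i$ and $B_{i-1}$ delivers the basic formula expressing matrix coefficients of $\xi_i$ as translates of $\varphi$.
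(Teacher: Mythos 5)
Your proof is correct and follows essentially the same route as the paper: totality of $A_i$ and $B_{i-1}$ turns matrix coefficients of $\xi_i$ into translates of the continuous function $\varphi$, and uniform boundedness plus density extends the continuity to all vectors. Your separability argument (countable dense subset of $G^{d-i}$, weak continuity of $\Psi_i$, Mazur) is just a careful write-up of the step the paper dispatches in one sentence, so there is no genuine difference in method.
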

\begin{proof}
Choose $\h_i$'s and $\xi_i$'s as in Lemma \ref{lem total}. Suppose that $x_i^{(j)}\rightarrow x_i$ in $G$. Take any vectors 
\begin{equation*}
v_i =\xi_{i+1} (x_{i+1})\cdots \xi_d(x_d)  v_d \in \h_i,\quad v_{i-1} =[\xi_1(x_1) \cdots \xi_{i-1}(x_{i-1)}]^*v_0 \in \h_{i-1}.
\end{equation*} Then by continuity of $\varphi$ we have
\begin{align*}
\langle \xi_i (x_i^{(j)}) v_i, v_{i-1}\rangle &=\varphi(x_1 \cdots x_i^{(j)}\cdots x_d) \rightarrow \varphi(x_1\cdots x_d) = \langle \xi_i (x_i) v_i, v_{i-1}\rangle.
\end{align*} By uniform boundedness of $\xi_i$ and totality assumptions, the above convergence is true for any $v_i
\in \h_i$ and $v_{i-1}\in \h_{i-1}$. This shows that each $\xi_i$ is WOT-continuous. Now, $\h_i$ is separable since $G$ is second countable, $\xi_i$  is WOT-continuous, and $A_i$ is dense in $\h_i$.
\end{proof}
\begin{lemma}\label{lem conv action}
If $f\in L^1(G)$ and $\varphi\in M_d(G)$, then we have $f*\varphi\in M_d(G)$ with $\|f*\varphi\|_{M_d}\leq \|f\|_1 \|\varphi\|_{M_d}$.
\end{lemma}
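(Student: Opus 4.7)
The plan is to absorb the convolution with $f$ into the first slot of an $M_d$-representation of $\varphi$, leaving the other slots untouched. The case $\varphi=0$ is trivial, so assume $\varphi\neq 0$ and fix $\varepsilon>0$. By Lemma \ref{lem cont} I can choose separable Hilbert spaces $\h_0,\dots,\h_d$ and WOT-continuous bounded maps $\xi_i:G\to \B(\h_i,\h_{i-1})$ realizing \eqref{eq eq} with $\prod_{i=1}^d\|\xi_i\|_\infty\le \|\varphi\|_{M_d}+\varepsilon$. The WOT-continuity is exactly what is needed so that for each $v\in\h_1$ and $x\in G$ the scalar function $y\mapsto \xi_1(y^{-1}x)(v)$ is Borel measurable, and it is dominated by $\|\xi_1\|_\infty\|v\|$.

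Now define $\tilde{\xi}_1:G\to \B(\h_1,\C)$ in the weak sense by
\begin{align*}
\tilde{\xi}_1(x_1)(v) := \int_G f(y)\,\xi_1(y^{-1}x_1)(v)\,dy, \qquad v\in\h_1.
\end{align*}
By the measurability and boundedness observed above, the integral converges absolutely and $\tilde{\xi}_1(x_1)$ is a linear functional on $\h_1$ of norm at most $\|f\|_1\|\xi_1\|_\infty$, so $\tilde{\xi}_1$ is a bounded map with $\|\tilde{\xi}_1\|_\infty\le \|f\|_1\|\xi_1\|_\infty$. Keeping $\xi_2,\dots,\xi_d$ as they are and using \eqref{eq eq} inside the integral yields, for all $x_1,\dots,x_d\in G$,
\begin{align*}
\tilde{\xi}_1(x_1)\xi_2(x_2)\cdots\xi_d(x_d)(1)
&=\int_G f(y)\,\xi_1(y^{-1}x_1)\xi_2(x_2)\cdots\xi_d(x_d)(1)\,dy\\
&=\int_G f(y)\,\varphi(y^{-1}x_1x_2\cdots x_d)\,dy = (f*\varphi)(x_1\cdots x_d).
\end{align*}

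Since $\varphi\in C_b(G)$ and $f\in L^1(G)$, a routine dominated-convergence argument shows $f*\varphi\in C_b(G)$, so $f*\varphi$ satisfies the continuity requirement in Definition \ref{defn Md}. The representation above then gives $f*\varphi\in M_d(G)$ with
\begin{align*}
\|f*\varphi\|_{M_d}\le \|\tilde{\xi}_1\|_\infty\prod_{i=2}^d\|\xi_i\|_\infty\le \|f\|_1\bigl(\|\varphi\|_{M_d}+\varepsilon\bigr),
\end{align*}
and letting $\varepsilon\to 0$ concludes. The main subtlety is making sense of the vector-valued integral defining $\tilde{\xi}_1$; this is precisely why Lemma \ref{lem cont} (which upgrades the $\xi_i$'s to WOT-continuous, hence weakly measurable, maps) is needed here.
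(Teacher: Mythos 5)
Your proof is correct and follows essentially the same route as the paper: both absorb the convolution into the first leg by setting $\xi_1'(x)v=\int_G f(y)\,\xi_1(y^{-1}x)v\,dy$ after invoking Lemma \ref{lem cont} to get a WOT-continuous (hence weakly measurable) representation, and both conclude $\|f*\varphi\|_{M_d}\le\|f\|_1\|\varphi\|_{M_d}$ from the obvious norm estimate on the modified leg. Your explicit $\varepsilon$-bookkeeping for the infimum in $\|\varphi\|_{M_d}$ is a small point the paper glosses over, but the argument is the same.
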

\begin{proof} The continuity of $f*\varphi$ is a well known fact of harmonic analysis. Let $\h_i$'s and $\xi_i$'s be as in Lemma \ref{lem cont}. Define $\xi_1':G\rightarrow \B(\h_1,\h_0)$ as
\begin{align*}
\xi_1'(x)v_1 = \int_G f(y) (\xi_1(y^{-1}x)v_1) dy
\end{align*} for all $x\in G$ and $v\in \h_1$. Note that $\|\xi_1'\|\leq \|f\|_1\|\xi_1\|$ and 
\begin{align*}
\xi_1'(x_1)\xi_2(x_2)\cdots \xi_d(x_d)v_d &= \int_G f(y) (\xi_1(y^{-1}x_2)\xi_2(x_2)\cdots \xi_d(x_d)v_d) dy
\\
&=\int_G f(y)\varphi (y^{-1}x_1\cdots x_d) dy 
\\
&= (f*\varphi)(x_1\cdots x_d)
\end{align*} for all $x_1,\dots,x_d\in G$. Therefore, $f*\varphi\in M_d(G)$ with $\|f*\varphi\|_{M_d}\leq \|f\|_1\|\varphi\|_{M_d}$.
\end{proof}

\begin{proof}[Proof of Proposition \ref{prop uniform conv}]
Let $(\varphi_i)$ be a net in $M_d(G)$ such that $\varphi_i\rightarrow 1$ in $\sigma(M_d(G),X_d(G))$-topology. It follows that $\varphi_i\rightarrow 1$ in $\sigma(L^\infty (G),L^1(G))$-topology. By uniform boundedness principle, we have $\sup_i\|\varphi\|_\infty<\infty$. 
Take any compactly supported positive function $f\in C_c(G)$ with $\|f\|_1=1$ and put $\varphi_i' = f*\varphi_i$. 
Now the proof of \cite[Lemma 2.2]{Haa16} works too see that $\varphi_i'\rightarrow 1$ uniformly on compact subsets. We note that if $\varphi_i$ is compactly supported, then so is  $\varphi_i'$. Similarly, if $\varphi_i$ vanish at infinity, then so does $\varphi_i'$. By Lemma \ref{lem conv action}, we have $\varphi_i'\in M_d(G)$ with $\|\varphi_i'\|_{M_d}\leq \|\varphi_i\|_{M_d}$. This shows that if we replace the $\sigma(M_d(G),X_d(G))$-convergence by uniform convergence on compact subsets in the definition of $M_d$-WA and $M_d$-WH, we will get weaker properties. Conversely, it is always true that uniform convergence on compact subsets implies $\sigma(L^\infty (G),L^1(G))$-convergence. Moreover, for a uniformly bounded net in $M_d(G)$, its $\sigma(L^\infty (G),L^1(G))$-convergence implies $\sigma(M_d (G),X_d(G))$-convergence as $L^1(G)\subseteq X_d(G)$ is dense. This completes the proof.
\end{proof}

\subsection{The dual space $M_d'(G)$ of $X_d'(G) = L^1(G)\otimes_h\cdots \otimes_h L^1(G)$}
Let us recall the following theorem by Paulsen-Smith (see \cite[Theorem 3.2]{PS87}, \cite[Theorem 9.4.4]{ER22}, and \cite[Corollary 5.4]{Pis03}).
\begin{theorem}\label{thm ER}
Given operator spaces $V_1$, \dots, $V_d$, a linear mapping 
\begin{align*}
\phi: V_1\otimes \dots \otimes V_d\rightarrow \B(\h_d,\h_0)
\end{align*}
extends to a completely bounded map on the Haagerup tensor product if and only if there exist Hilbert spaces $\h_1$, \dots, $\h_{d-1}$ and completely bounded linear maps 
\begin{align*}
\eta_i : V_i \rightarrow \B(\h_i,\h_{i-1}),\quad i=1,\dots, d
\end{align*} such that 
\begin{align}\label{eq eq1}
\phi (v_1\otimes \cdots \otimes v_d) = \eta_1(v_1)\cdots \eta_d(v_d)
\end{align} for all $v_1\in V_1$, \dots, $v_d\in V_d$. Moreover, we can assume that $\|\phi\|_{cb} = \|\eta_1\|_{cb}\cdots \|\eta_d\|_{cb}$.
\end{theorem}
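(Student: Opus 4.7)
The plan is induction on $d$. The base case $d=2$ is the classical bilinear factorization theorem (Christensen-Sinclair, Paulsen-Smith), and the inductive step exploits the (complete isometric) associativity of the Haagerup tensor product. I would first dispose of the easy ``if'' direction, then tackle the harder converse.

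For sufficiency, suppose we are given completely bounded $\eta_i : V_i \to \B(\h_i,\h_{i-1})$ and define $\phi$ by \eqref{eq eq1}. I would use the row-column characterization of the Haagerup norm: every $u \in V_1 \otimes \cdots \otimes V_d$ admits decompositions $u = R_1 \odot \cdots \odot R_d$ with $R_i$ a rectangular matrix having entries in $V_i$ (sizes compatible with matrix multiplication), and $\|u\|_h$ equals the infimum of $\|R_1\|_{\mathrm{row}}\|R_2\|\cdots\|R_{d-1}\|\|R_d\|_{\mathrm{col}}$ over such decompositions. Applying $\eta_i$ entry-wise rescales the operator-matrix norm by at most $\|\eta_i\|_{cb}$, and operator-valued matrix multiplication is submultiplicative, so $\|\phi(u)\| \leq \prod_i \|\eta_i\|_{cb} \cdot \|u\|_h$. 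The identical argument with matrix amplifications yields $\|\phi\|_{cb} \leq \prod_i \|\eta_i\|_{cb}$.

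For the converse when $d=2$, given a CB map $\phi: V_1 \otimes_h V_2 \to \B(\h_2,\h_0)$, I would build $\h_1$ and $\eta_1,\eta_2$ by a Stinespring-type construction: form an algebraic tensor product and equip it with a sesquilinear form assembled from $\phi$; complete boundedness of $\phi$ is precisely the positivity condition needed to perform the GNS-style separation and completion, producing $\h_1$. The maps $\eta_1$ and $\eta_2$ then arise as right- and left-multiplication operators in this picture. Alternatively one may first apply Wittstock's extension theorem to reduce to CB maps on enveloping $C^*$-algebras and invoke the classical Stinespring dilation. For the inductive step $d > 2$, by associativity of $\otimes_h$ the map $\phi$ is a CB bilinear map on $V_1 \otimes_h (V_2 \otimes_h \cdots \otimes_h V_d)$; the $d=2$ case yields $\eta_1: V_1 \to \B(\h_1,\h_0)$ and a CB map $\psi: V_2 \otimes_h \cdots \otimes_h V_d \to \B(\h_d,\h_1)$ with $\phi(v_1 \otimes w) = \eta_1(v_1)\psi(w)$. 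The inductive hypothesis applied to $\psi$ completes the factorization, and combining the two inequalities yields the norm identity $\|\phi\|_{cb} = \prod_i \|\eta_i\|_{cb}$.

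The main obstacle is the $d=2$ converse: verifying positive semidefiniteness of the sesquilinear form built from $\phi$, which is exactly where complete boundedness (rather than mere boundedness) is essential. Once positivity is in hand the remaining steps are routine, and the iteration via associativity is purely formal.
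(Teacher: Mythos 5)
This statement is not proved in the paper at all: it is the Christensen--Sinclair/Paulsen--Smith representation theorem for completely bounded multilinear maps, quoted with references to \cite{PS87}, \cite[Theorem 9.4.4]{ER22} and \cite[Corollary 5.4]{Pis03}. So there is no in-paper argument to compare against; what you have written is a sketch of the standard literature proof, and in outline it is the right one. The ``if'' direction via the decomposition characterization of the Haagerup norm (entrywise application of the $\eta_i$ followed by submultiplicativity of operator-matrix products) is correct and complete as stated, and the reduction of the general $d$ to the case $d=2$ via the complete isometric associativity of $\otimes_h$ is exactly how \cite{ER22} and \cite{Pis03} organize the multilinear statement; the norm identity then follows by combining the two inequalities and rescaling the $\eta_i$.

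The one place where your sketch is thinner than it should be is the crux you yourself flag: the $d=2$ converse. The assertion that ``complete boundedness of $\phi$ is precisely the positivity condition needed'' for a GNS-style completion is not something one can verify directly on the naive sesquilinear form built from $\phi$ --- a completely bounded bilinear map does not hand you a positive form for free. The standard ways to supply this step are (a) Paulsen's off-diagonal trick, realizing $\phi$ as a corner of a completely positive map on a larger (matrix) C*-algebra and then invoking Stinespring, after first using injectivity of $\otimes_h$ and Wittstock's extension theorem to pass from the operator spaces $V_i$ to containing C*-algebras, or (b) the original Christensen--Sinclair inductive construction of the representing Hilbert space. Your ``alternatively'' sentence names route (a), which is the Paulsen--Smith argument and does work; but as written the primary route leaves the positivity unestablished. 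If you intend this as a genuine proof rather than a citation, you need to commit to one of these mechanisms for the $d=2$ case; everything else in your plan is routine once that is in place.
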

Conventionally, we only consider $\h_0=\h_d=\C$. Denote
\begin{align*}
X_d ' (G)=\underbrace{L^1(G)\otimes_h\cdots \otimes_h L^1(G)}_{d}
\end{align*} where $L^1(G)$ is endowed with its maximal operator space structure. Denote by $M_d' (G)$ the (operator) dual of $X_d'(G)$. If we put $V_i = L^1(G)$ and $\h_0=\h_d=\C$ in Theorem \ref{thm ER}, we simply get $\|\phi\|_{cb} = \|\phi\|$ because $\C\cong \B(\C,\C)$ is abelian \cite[Proposition 2.2.6]{ER22},  and $\|\eta_i\|_{cb} =\|\eta_i\|$ because $L^1(G)$ has its maximal operator space structure \cite[Section 3.3]{ER22}. With this simplification, we get the following version of Theorem \ref{thm ER}.
\begin{corollary}\label{cor ER}
A linear map $\phi: L^1(G)\otimes \cdots\otimes L^1(G)\rightarrow \C$ extends to a bounded linear functional $\phi\in M_d'(G) = X_d'(G)^*$ on $X_d' (G)$ if and only if 
there exist Hilbert spaces $\h_i$'s and bounded linear maps $\eta_i:L^1(G)\rightarrow \B(\h_i,\h_{i-1})$ such that
\begin{align}\label{eta}
\phi (f_1\otimes \cdots \otimes f_d) = \eta_1(f_1)\cdots \eta_d(f_d)
\end{align}  for all $f_1,\dots,f_d\in L^1(G)$. 
Moreover, we can assume that $\|\phi\| = \|\eta_1\|\cdots \|\eta_d\|$.
\end{corollary}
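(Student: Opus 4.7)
The plan is to apply Theorem \ref{thm ER} directly with $V_i = L^1(G)$ and $\h_0 = \h_d = \C$, and then strip the ``complete'' adjectives using two standard operator space identifications that the paragraph preceding the corollary already foreshadows.

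First I would instantiate Theorem \ref{thm ER} with each $V_i = L^1(G)$ carrying its maximal operator space structure and with $\h_0 = \h_d = \C$. This gives the equivalence: a linear map $\phi: L^1(G)\otimes \cdots \otimes L^1(G)\rightarrow \B(\C,\C)\cong \C$ extends to a completely bounded functional on $X_d'(G)$ if and only if there is a factorization $\phi(f_1\otimes\cdots\otimes f_d) = \eta_1(f_1)\cdots\eta_d(f_d)$ through Hilbert spaces $\h_1,\dots,\h_{d-1}$ by completely bounded maps $\eta_i:L^1(G)\rightarrow \B(\h_i,\h_{i-1})$, with $\|\phi\|_{cb} = \|\eta_1\|_{cb}\cdots\|\eta_d\|_{cb}$ achievable.

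Next I would identify the cb-norms with ordinary norms at both ends. The codomain $\B(\C,\C)$ is one-dimensional hence abelian, so by \cite[Proposition 2.2.6]{ER22} any bounded linear map into it is automatically completely bounded with $\|\phi\|_{cb} = \|\phi\|$; in particular the cb-dual of $X_d'(G)$ coincides with its Banach dual $M_d'(G)$. At the other end, because each copy of $L^1(G)$ carries its maximal operator space structure, \cite[Section 3.3]{ER22} yields that every bounded linear map out of $L^1(G)$ into any operator space is automatically completely bounded with $\|\eta_i\|_{cb} = \|\eta_i\|$. Substituting both identifications into the equivalence supplied by Theorem \ref{thm ER} produces the stated factorization together with the multiplicative norm identity $\|\phi\| = \|\eta_1\|\cdots\|\eta_d\|$.

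There is no genuine obstacle here; the corollary is essentially a dictionary translation of the Paulsen-Smith theorem into the Banach space conventions used in the rest of the paper. The only care required is to fix the maximal operator space structure on $L^1(G)$ consistently, so that the Haagerup tensor product $X_d'(G)$ defined here is exactly the one fed into Theorem \ref{thm ER}.
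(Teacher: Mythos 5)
Your proposal is correct and follows the paper's own argument exactly: both instantiate Theorem \ref{thm ER} with $V_i = L^1(G)$ (maximal operator space structure) and $\h_0=\h_d=\C$, then identify $\|\phi\|_{cb}=\|\phi\|$ via the commutativity of $\B(\C,\C)\cong\C$ and $\|\eta_i\|_{cb}=\|\eta_i\|$ via the maximal operator space structure on $L^1(G)$, citing the same facts from \cite{ER22}. Nothing is missing.
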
 

Let $\phi\in M_d'(G)$ and let $\h_i$'s and $\eta_i$'s be as in \eqref{eta}. Denote 
\begin{align*}
C_i = \{\eta_{i+1}(f_{i+1})\cdots \eta_d (f_d)v_d \in \h_i\mid f_{i+1},\dots ,f_d\in L^1(G)\},\quad i=0,\dots,d-1,
\end{align*} and 
\begin{align*}
D_i = \{[\eta_1 (f_1)\cdots \eta_i (f_i)]^* v_0 \in\h_i\mid f_1,\dots ,f_{i-1}\in L^1(G)\},\quad i=1,\dots,d.
\end{align*} 
We have an analogue of Lemma \ref{lem total}. The proof is done in exactly the same way by replacing $(A_i,B_i,\xi_i, x_i)$'s by $(C_i,D_i,\eta_i,f_i)$'s. We present the proof for convenience.
\begin{lemma}\label{lem total2}
If $\phi\in M_d'(G)$ is non-zero, we can choose $\h_i$'s and $\eta_i$'s such that $C_i$ and $D_i$ are total in $\h_i$. In this case, $\h_i$'s are separable.
\end{lemma}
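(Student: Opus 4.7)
The plan is to run the argument of Lemma \ref{lem total} verbatim under the substitution $(A_i,B_i,\xi_i,x_i) \leftrightarrow (C_i,D_i,\eta_i,f_i)$, and then to derive separability from the separability of $L^1(G)$, which itself follows from the second countability of $G$.

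First I would let $P_i\in \B(\h_i)$ be the orthogonal projection onto $\overline{\spn}\,C_i$; since $\phi\neq 0$ and $\h_0=\C$, we have $P_0=\id$. Replacing $(\h_i,\eta_i)$ by $(P_i\h_i,\,P_{i-1}\eta_i(\cdot))$, equation \eqref{eta} is preserved, because each $\eta_i(f)$ sends $\overline{\spn}\,C_i$ into $\overline{\spn}\,C_{i-1}$ and the intermediate projections are absorbed; by construction, each $C_i$ is then total. Next I would let $Q_i$ be the projection onto $\overline{\spn}\,D_i$, so that $Q_d=\id$. Setting $\h_i' = Q_i\h_i$ and $\eta_i'(f) = Q_{i-1}\eta_i(f)$ (with $Q_0=\id$), the key identity
\begin{align*}
Q_i \eta_{i+1}(f_{i+1}) Q_{i+1} = Q_i \eta_{i+1}(f_{i+1})
\end{align*}
should follow by pairing against a spanning family $v_i' = [\eta_1(f_1)\cdots \eta_i(f_i)]^* v_0$ of $D_i$: moving $\eta_{i+1}(f_{i+1})$ to the other side of the inner product produces the vector $[\eta_1(f_1)\cdots \eta_{i+1}(f_{i+1})]^* v_0 \in D_{i+1}$, which is already fixed by $Q_{i+1}$. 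Granted this identity, \eqref{eta} is preserved, $D_i'$ remains total in $\h_i'$ by construction, and a reverse induction on $i$ starting from $i=d-1$ yields
\begin{align*}
\overline{\spn}\,C_i' = \overline{\spn}\,Q_i C_i = Q_i\h_i = \h_i',
\end{align*}
so $C_i'$ is total as well.

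For separability, I would use that second countability of $G$ implies $L^1(G)$ is separable, and fix a countable dense set $\{f_n\}\subset L^1(G)$. Since each $\eta_j$ is norm-continuous and operator composition is jointly continuous on bounded sets, the countable family $\{[\eta_1(f_{n_1})\cdots \eta_i(f_{n_i})]^* v_0 : n_1,\dots,n_i\in\N\}$ is dense in $D_i'$, and hence $\h_i' = \overline{\spn}\,D_i'$ is separable. I do not anticipate a real obstacle: the only step with any content is the identity $Q_i\eta_{i+1}(f)Q_{i+1} = Q_i\eta_{i+1}(f)$, which goes through by the same adjoint manipulation as in Lemma \ref{lem total}; everything else is bookkeeping.
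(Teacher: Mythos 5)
Your proposal is correct and follows essentially the same route as the paper: the same two-stage compression by projections $P_i$ onto $\overline{\spn}\,C_i$ and $Q_i$ onto $\overline{\spn}\,D_i$, the same adjoint manipulation to get $Q_i\eta_{i+1}(f)Q_{i+1}=Q_i\eta_{i+1}(f)$, the same reverse recursion for totality of $C_i'$, and separability deduced from the separability of $L^1(G)$ together with continuity of the $\eta_i$'s. No gaps.
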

\begin{proof} Let $P_i\in \B(\h_i)$ be the projection onto $\overline{\spn} C_i$. $P_0$ is the identity because $\phi$ is nonzero and $\h_0 \cong \C$. The Hilbert spaces $P_i\h_i$ and the bounded maps $P_{i-1}\eta_i$ also satisfy \eqref{eta}, thus we can and will assume that each $C_i$ is total in $\h_i$. Let $Q_i \in\B(\h_i)$ be the projection onto $\overline{\spn} D_i$. Again, $Q_d$ is just the identity on $\h_d$ because 
\begin{align*}
\phi(f_1\otimes\cdots \otimes f_d) = \langle v_d, [\eta_1 (f_1)\cdots \eta_d (f_d)]^* v_0\rangle
\end{align*} is nonzero for some $f_1,\dots , f_d \in L^1(G)$ and $\h_d\cong\C$. Put $\h_i' = Q_i\h_i$ and $\eta_i' (f_i) = Q_{i-1} \eta_i(f_i)$  for all $i=1,\dots ,d$ with $Q_0 = \id_{\h_0}$. Then $(\h_i',\eta_i')$'s still satisfy \eqref{eta}. Indeed, take any vectors $v_i' = [\eta_1(f_1)\cdots \eta_i(f_i)]^*v_0 \in\h_i'$ and $v_{i+1}\in \h_{i+1}'$. Then we have
\begin{align*}
\langle Q_i\eta_{i+1}(f_{i+1})Q_{i+1} v_{i+1}',v_i'\rangle &= 
\langle v_{i+1}',Q_{i+1} [\eta_1(f_1)\cdots \eta_i(f_{i+1})]^*v_i'\rangle
\\ 
&=\langle v_{i+1}',[\eta_1(f_1)\cdots \eta_i(f_{i+1})]^*v_i'\rangle
\\
&=\langle Q_i\eta_{i+1}(f_{i+1})v_{i+1}',v_i'\rangle.
\end{align*} Since such $v_i'$'s are total in $\h_i'$, the above equality is true for any $v_i'\in \h_i'$. In other words, we have 
\begin{align}\label{eq remove QQ}
Q_i\eta_{i+1}(f_{i+1}) Q_{i+1} = Q_i\eta_{i+1}(f_{i+1}).
\end{align} From this, it is easily seen that
\begin{align*}
 \eta_1'(f_1)\cdots \eta_d'(f_d)  &= 
 Q_0\eta_1(f_1)Q_1\eta_2(f_2) Q_2\cdots Q_{d-1}\eta_d(f_d)  
 \\
 &= 
 \eta_1(f_1)\cdots \eta_d(f_d)= \phi(f_1\otimes \cdots \otimes f_d).
\end{align*}
Moreover, the set
\begin{align*}
D_i'= \{[\eta_1' (f_1)\cdots \eta_i' (f_i)]^* v_0 \in\h_i\mid f_1,\dots ,f_{i-1}\in L^1(G)\}
\end{align*} is total in $\h_i'$. Let us prove that
\begin{align*}
C_i' = \{\eta_{i+1}'(f_{i+1})\cdots \eta_d' (f_d)v_d \in \h_i\mid f_{i+1},\dots ,f_d\in L^1(G)\}
\end{align*} is total in $\h_i'$ by reverse recursion. Firstly,
\begin{align*}
C_{d-1}' = \{Q_{d-1}\eta_d (f_d)v_d \in \h_i\mid f_d\in L^1(G)\}
\end{align*} is total in $\h_{d-1}' = Q_{d-1} \h_{d-1}$ because $C_{d-1}$ is total in $\h_{d-1}$. Assume that $C_{i+1}'$ is total in $\h_{i+1}' = Q_{i+1} \h_{i+1}$. Then by \eqref{eq remove QQ} and the totality condition of $C_i$'s, we have
\begin{align*}
\overline{\spn} C_i' 
& = \overline{\spn} \bigcup_{f_{i+1}\in L^1(G)} \eta_{i+1}' (f_{i+1})C_{i+1}' = \overline{\spn} \bigcup_{f_{i+1}\in L^1(G)} Q_i \eta_{i+1} (f_{i+1})Q_{i+1}\h_{i+1}
\\
& = \overline{\spn} \bigcup_{f_{i+1}\in L^1(G)} Q_i \eta_{i+1} (f_{i+1})\h_{i+1} = \overline{\spn} Q_i C_i = Q_i \h_i = \h_i', 
\end{align*} showing that $C_i'$ is total in $\h_i'$. This proves the first assertion of the lemma. Recall that $G$ is second countable, equivalently $L^1(G)$ is separable. Now, by density and continuity, it follows that the Hilbert spaces $\h_i'$ are separable.
\end{proof}
Consider the $d$-convolution map
\begin{align}\label{convd}
\begin{aligned}
\conv_d : \underbrace{L^1(G)\otimes\cdots \otimes L^1(G)}_d &\rightarrow  L^1(G)
\\
f_1\otimes \cdots \otimes f_d &\mapsto f_1*\cdots *f_d.
\end{aligned}
\end{align} 
Denote by $Y$ the closure of the kernel of $\conv_d$ in $X_d'(G)$. Denote by $Y^\perp \subseteq M_d' (G)$ the closed subspace of functionals that vanish on $Y$ (equivalently on the kernel of $\conv_d$). We note that a functional $\phi \in M_d'(G)$ is in $Y^\perp$ if and only if there is a map  $
\tilde{\phi}:L^1(G)  \rightarrow \C$ such that
\begin{align}\label{tildephi}
\tilde{\phi} (f_1*\cdots *f_d) = \phi(f_1\otimes \cdots \otimes f_d)v_d.
\end{align} Theorem \ref{thm1} suggests that $M_d(G)$  and $Y^\perp$ are in a natural correspondence. Then Theorem \ref{thm2} is straightforward since $Y^\perp \cong (X_d' (G)/Y)^*$ isometrically.

\section{Main section}
In this section, we will prove the main results.
\subsection{Mapping from $M_d(G)$ into $Y^\perp\subseteq M_d' (G)$}
For $\varphi\in M_d(G)$, we write
\begin{align}\label{hat}
\hat{\varphi} (f_1\otimes \cdots \otimes f_d) = (\varphi,f_1*\cdots*f_d)_{(L^\infty(G),L^1(G))}
\end{align} for all $f_1,\dots,f_d\in L^1(G)$. Since $\conv_d$ is linear, $\hat{\varphi}$ extends to a linear functional on $L^1(G)\otimes \cdots\otimes L^1(G)$ and vanish on the kernel of $\conv_d$.
\begin{lemma}\label{lem Md to Y} The map $\varphi \in M_d(G)\mapsto \hat{\varphi}\in Y^\perp\subseteq M_d'(G)$ is a well defined contraction.
\end{lemma}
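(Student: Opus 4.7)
The plan is to verify the factorization hypothesis of Corollary \ref{cor ER} by producing, for each non-zero $\varphi \in M_d(G)$, bounded linear maps $\eta_i : L^1(G) \to \B(\h_i, \h_{i-1})$ whose product recovers $\hat{\varphi}$. The vanishing on $\ker \conv_d$ is already built into the definition \eqref{hat}, and the case $\varphi = 0$ is trivial, so the substance is the norm inequality $\|\hat{\varphi}\| \leq \|\varphi\|_{M_d}$.

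Fix $\varepsilon > 0$ and invoke Lemma \ref{lem cont} to obtain separable Hilbert spaces $\h_i$ and WOT-continuous bounded maps $\xi_i : G \to \B(\h_i, \h_{i-1})$ satisfying \eqref{eq eq} with $\prod_{i=1}^d \|\xi_i\|_\infty \leq \|\varphi\|_{M_d} + \varepsilon$. For each $i$ and $f \in L^1(G)$, I would define $\eta_i(f) \in \B(\h_i, \h_{i-1})$ weakly by the prescription
\[
\langle \eta_i(f) v, w\rangle = \int_G f(x) \langle \xi_i(x) v, w\rangle \, dx, \qquad v \in \h_i,\ w \in \h_{i-1}.
\]
The integrand is measurable by WOT-continuity of $\xi_i$ and dominated by $|f(x)|\,\|\xi_i\|_\infty \|v\| \|w\|$, so the formula defines a bounded sesquilinear form, hence an operator with $\|\eta_i(f)\| \leq \|f\|_1 \|\xi_i\|_\infty$. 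Linearity in $f$ is immediate, giving $\|\eta_i\| \leq \|\xi_i\|_\infty$.

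Next I would verify the factorization $\hat{\varphi}(f_1 \otimes \cdots \otimes f_d) = \eta_1(f_1) \cdots \eta_d(f_d)$. Proceeding by reverse induction on $k$, I would show that $\eta_k(f_k) \cdots \eta_d(f_d) v_d$ equals the weak integral of $f_k(x_k) \cdots f_d(x_d)\, \xi_k(x_k) \cdots \xi_d(x_d) v_d$ over $G^{d-k+1}$ in $\h_{k-1}$; the inductive step is a Fubini interchange, justified by absolute convergence coming from the uniform norm bound on the $\xi_i$'s and the $L^1$-integrability of the $f_j$'s. At $k=1$, using $\h_0 = \C$ and $v_d = 1$, this becomes
\[
\eta_1(f_1) \cdots \eta_d(f_d)(1) = \int_{G^d} f_1(x_1) \cdots f_d(x_d)\, \varphi(x_1 \cdots x_d) \, dx_1 \cdots dx_d,
\]
and one more application of Fubini together with the definition of convolution identifies the right-hand side with $\hat{\varphi}(f_1 \otimes \cdots \otimes f_d)$. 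Corollary \ref{cor ER} then gives $\|\hat{\varphi}\| \leq \prod_i \|\eta_i\| \leq \|\varphi\|_{M_d} + \varepsilon$, and letting $\varepsilon \to 0$ concludes.

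The main obstacle I anticipate is the bookkeeping in the iterated weak-integral and Fubini argument: one must confirm at each step that the intermediate weak integrals genuinely define elements of the appropriate $\h_{k-1}$ and that the scalar integrands are jointly measurable and absolutely integrable on the product $G^{d-k+1}$. These points are secured by WOT-continuity (hence weak measurability) of the $\xi_i$'s together with the $\prod \|\xi_i\|_\infty \cdot \prod \|f_j\|_1$ uniform bound, so once this is set up cleanly the rest of the argument is routine.
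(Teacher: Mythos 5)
Your proposal is correct and follows essentially the same route as the paper: the operators $\eta_i(f)$ you define weakly are exactly the maps $\hat{\xi}_i$ used in the paper's proof, the iterated Fubini computation is the paper's chain of equalities \eqref{eq hat varphi} read in the other direction, and the conclusion via Corollary \ref{cor ER} is identical. Your explicit $\varepsilon$-optimal choice of the $\xi_i$'s is a slightly more careful handling of the infimum in $\|\varphi\|_{M_d}$ than the paper's shorthand, but it is the same argument.
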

\begin{proof} 
If $\xi:G\rightarrow \B(\h,\mathcal{K})$ is a bounded weakly measurable map, we can extend it to a bounded linear map  $\hat{\xi}:L^1(G)\rightarrow \B(\h,\mathcal{K})$ as
\begin{align*}
\langle\hat{\xi}(f)v,w\rangle = \int_G f(x) \langle\xi(x)v,w\rangle dx,\quad v\in\h,\quad w\in\mathcal{K}.
\end{align*} Clearly, we have $\|\hat{\xi}\| \leq \sup_{x\in G}\|\xi(x)\|$. For $\varphi\in M_d(G)$, we choose separable $\h_i$'s and  WOT-continuous $\xi_i$'s as in Lemma \ref{lem cont}.
Observe that
\begin{align}\label{eq hat varphi}
\begin{aligned}
\langle &\hat{\xi}_1(f_1)\cdots \hat{\xi}_d(f_d)v_d,v_0\rangle =
\int_G f_1(x_1)
\langle \xi_1(x_1)\hat{\xi}_2(f_2)\cdots \hat{\xi}_d(f_d)v_d,v_0\rangle dx_1
\\
&=
\int_G\int_G f_1(x_1)f_2(x_2)
\langle \xi_2(x_2)\hat{\xi}_3(x_3)\cdots \hat{\xi}_d(f_d)v_d,\xi_1(x_1)^*v_0\rangle dx_2 dx_1
\\ &\dots
\\
&=\int_G\cdots \int_G f_1(x_1)\cdots f_d(x_d)
\langle \xi_d(x_d)v_d,\xi_{d-1}(x_{d-1})^*\cdots \xi_1(x_1)^*v_0\rangle dx_{d}\cdots dx_1
\\
&=\int_G\cdots \int_G f_1(x_1)\cdots f_d(x_d)
 \varphi(x_1\cdots x_d) dx_d\cdots dx_1
\\
&= (\varphi, f_1*\cdots *f_d)_{(L^\infty(G),L^1(G))} = \hat{\varphi} (f_1\otimes \cdots \otimes f_d).
\end{aligned}
\end{align} 
for all $f_1,\dots,f_d\in L^1(G)$. By Corollary \ref{cor ER}, $\hat{\varphi}$ extend to a bounded linear functional on $X_d'(G)$ with 
\begin{align*}
\|\hat{\varphi}\| \leq \prod_{i=1}^d \|\hat{\xi}_i\|\leq \prod_{i=1}^d \|{\xi}_{i}\|_\infty= \|\varphi\|_{M_d}.
\end{align*} This proves the lemma.
\end{proof}
\subsection{Mapping from $Y^\perp\subseteq M_d' (G)$ into $M_d(G)$}
We want to define a contraction
\begin{align*}
\phi\in Y^\perp \mapsto \check{\phi}\in M_d(G)
\end{align*} that is inverse to the map in Lemma \ref{lem Md to Y}. Let $(k_n)_{n\in\N}$ be a bounded left-right approximate identity of $L^1(G)$ consisting of compactly supported, positive, self-adjoint functions, i.e. $k_n\in C_c(G)_+$, $k_n^* = k_n$, $\|k_n\|_1 \leq 1$, $\|k_n * f-f\|_1\rightarrow 0$, and $\|f*k_n-f\|_1\rightarrow 0$ for all $f\in L^1(G)$. For $\phi\in Y^\perp$, define $\check{\phi}$ as 
\begin{align}\label{check phi}
\check{\phi}: x\in G\mapsto  \lim_n \phi (\lambda(x) k_n\otimes \cdots \otimes k_n)v_d.
\end{align} We will see soon that the above is a well defined continuous function on $G$.
\begin{lemma}\label{lem Y to Md}
The map $\phi\in Y^\perp\subseteq M_d'(G) \mapsto \check{\phi}\in M_d(G)$ is a well defined contraction.
\end{lemma}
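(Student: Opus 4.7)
The strategy is to identify $\check\phi$ with the continuous representative of an $L^\infty$-function derived from $\phi$ via the induced functional $\tilde\phi$ on $L^1(G)$.

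First I apply Corollary \ref{cor ER} and Lemma \ref{lem total2} to fix separable Hilbert spaces $\h_i$ and bounded maps $\eta_i:L^1(G)\to\B(\h_i,\h_{i-1})$ satisfying \eqref{eta} with $\|\phi\|=\prod_i\|\eta_i\|$. Cohen's factorization theorem, applied to $L^1(G)$ which carries a contractive approximate identity, produces for each $F\in L^1(G)$ and $\varepsilon>0$ a decomposition $F=f_1*\cdots*f_d$ with $\prod_i\|f_i\|_1\le\|F\|_1+\varepsilon$. Combined with the pointwise estimate $|\phi(f_1\otimes\cdots\otimes f_d)|\le\|\phi\|\prod_i\|f_i\|_1$ and the fact that $\phi\in Y^\perp$, this exhibits $\tilde\phi$ from \eqref{tildephi} as a well-defined bounded linear functional on $L^1(G)$ with $\|\tilde\phi\|\le\|\phi\|$. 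Hence there is $\Phi\in L^\infty(G)$ with $\|\Phi\|_\infty\le\|\phi\|$ representing $\tilde\phi$ under the $(L^\infty,L^1)$-duality.

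Next, using the separability of the $\h_i$'s I disintegrate each $\eta_i$ into a bounded weakly-measurable field $\xi_i:G\to\B(\h_i,\h_{i-1})$ with $\|\xi_i(x)\|\le\|\eta_i\|$ almost everywhere and $\langle\eta_i(f)v,w\rangle=\int_G f(y)\langle\xi_i(y)v,w\rangle\,dy$ for all $v\in\h_i,w\in\h_{i-1}$. Fubini's theorem then compares
\[
\phi(f_1\otimes\cdots\otimes f_d)\,v_d = \int_{G^d} f_1(x_1)\cdots f_d(x_d)\,\xi_1(x_1)\cdots\xi_d(x_d)v_d\,dx_1\cdots dx_d
\]
with the Haar-invariance rewrite
\[
\tilde\phi(f_1*\cdots*f_d)\,v_d = \int_{G^d} f_1(x_1)\cdots f_d(x_d)\,\Phi(x_1\cdots x_d)v_d\,dx_1\cdots dx_d,
\]
forcing the almost-everywhere identity $\Phi(x_1\cdots x_d)v_d=\xi_1(x_1)\cdots\xi_d(x_d)v_d$ on $G^d$. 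The $M_d$-analogue of Lemma \ref{lem meas cont} recorded at the end of Section 2.1 then upgrades $\Phi$ to a continuous representative $\Phi_c\in M_d(G)$ with $\|\Phi_c\|_{M_d}\le\prod_i\|\xi_i\|_\infty\le\|\phi\|$.

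Finally, to identify $\check\phi$ with $\Phi_c$, I set $u_n:=k_n^{*d}$, which is still a bounded approximate identity of $L^1(G)$ concentrated at the identity, and compute
\[
\phi(\lambda(x)k_n\otimes k_n\otimes\cdots\otimes k_n)\,v_d = \tilde\phi(\lambda(x)u_n) = \int_G \Phi_c(xz)\,u_n(z)\,dz.
\]
Continuity of $\Phi_c$ together with the approximate-identity property of $u_n$ gives convergence to $\Phi_c(x)$ for every $x\in G$, so $\check\phi=\Phi_c\in M_d(G)$ and $\|\check\phi\|_{M_d}\le\|\phi\|$. The main technical obstacle is the measurable-to-continuous upgrade: showing that an $L^\infty$-function admitting a weakly-measurable $M_d$-factorization agrees a.e.\ with a genuine $M_d$-multiplier, which is an adaptation of the argument of Lemma \ref{lem meas cont} to multilinear factorizations. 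Once this is granted, the remainder reduces to routine Fubini and approximate-identity computations.
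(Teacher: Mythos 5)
Your overall route --- realize $\tilde\phi$ as an element $\Phi\in L^\infty(G)$ via Cohen factorization, disintegrate the $\eta_i$'s into operator-valued fields on $G$, and then upgrade measurable to continuous --- is genuinely different from the paper's, but as written it has a gap at exactly the step you defer. The disintegration $\langle\eta_i(f)v,w\rangle=\int_G f(y)\langle\xi_i(y)v,w\rangle\,dy$ (which does exist, since $\B(\h_i,\h_{i-1})$ is a dual space with separable predual when the $\h_i$'s are separable) only determines each $\xi_i$ up to a null set, and the Fubini comparison consequently yields $\Phi(x_1\cdots x_d)=\xi_1(x_1)\cdots\xi_d(x_d)v_d$ only for \emph{almost every} $(x_1,\dots,x_d)\in G^d$. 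The tool you invoke --- the $M_d$-analogue of Lemma \ref{lem meas cont} recorded at the end of Section 2.1 --- presupposes a function $\varphi\in M_d(G_{\mathrm d})$ whose factorization \eqref{eq eq} holds at \emph{every} point, with the $\xi_i$'s merely weakly measurable; it does not apply to an $L^\infty$-class admitting an a.e.\ factorization. Passing from an a.e.\ identity on $G^d$ to an everywhere-defined factorization of a genuine function (with no increase of $\prod_i\|\xi_i\|_\infty$) is the actual content of the lemma, not a routine adaptation, and you cannot simply redefine the $\xi_i$'s on null sets because the exceptional set in $G^d$ need not be a union of slices.

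This is precisely the difficulty the paper's proof is engineered to avoid: it never disintegrates the $\eta_i$'s. Instead it defines, for \emph{every} $x\in G$, an operator $\check{\eta}_i(x)$ on the total sets $C_i$, $D_i$ of Lemma \ref{lem total2} by the limit $\lim_n\langle\eta_i(\lambda(x)h_n)v_i,v_{i-1}\rangle$ with $h_n=k_n*\cdots*k_n$, checks via $\tilde\phi$ and $\phi\in Y^\perp$ that this is well defined and bounded by $\|\eta_i\|$, and verifies the factorization $\check{\eta}_1(x_1)\cdots\check{\eta}_d(x_d)v_d=\lim_n\phi(\lambda(x_1\cdots x_d)k_n\otimes\cdots\otimes k_n)v_d$ pointwise on all of $G^d$. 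Measurability of the resulting fields is then automatic (pointwise limits of continuous functions), and Lemma \ref{lem meas cont} applies legitimately to give continuity. If you want to keep your architecture, you should replace the abstract disintegration by this explicit translation-regularized construction, or else supply a genuine proof of the a.e.-to-everywhere upgrade; the first part of your argument (boundedness of $\tilde\phi$ via Cohen factorization, and the final approximate-identity computation identifying $\check\phi$ with the continuous representative) is correct and compatible with either repair.
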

\begin{proof}
Let $\phi\in Y^\perp$. Choose $\h_i$'s and $\eta_i$'s as in Lemma \ref{lem total2}. Denote $h_n = k_n*\cdots *k_n$. Then $(h_n)_{n\in\N}$ is also a bounded approximate identity with the same properties. Take any $x\in G$. For $v_i =\eta_{i+1}(f_i * f_{i+1}) \eta_{i+2}(f_{i+2})\cdots \eta_d(f_d) v_d \in \h_i$ and $v_{i-1} = [\eta_1(f_1) \cdots \eta_{i-1}(f_{i-1})]^*v_0 \in \h_{i-1}$ with $f_1,\dots,f_d\in L^1(G)$, define the operator $\check{\eta}_i(x)$ as 
\begin{align}\label{eq3}
\begin{aligned}
&\langle \check{\eta}_i(x) v_i,v_{i-1}\rangle = \langle \phi(f_1\otimes \cdots \otimes f_{i-1} \otimes \lambda(x) f_i\otimes f_{i+1}\otimes \cdots \otimes f_d)v_d,v_0\rangle
\\
&= \langle \eta_1(f_1) \cdots \eta_{i-1}(f_{i-1}) \eta_i(\lambda(x) f_i)\eta_{i+1}(f_{i+1}) \cdots \eta_d(f_d) v_d,v_0\rangle
\\
&= \lim_n \langle \eta_1(f_1)\cdots \eta_{i-1}(f_{i-1}) \eta_i(\lambda(x) h_n * f_i)\eta_{i+1}(f_{i+1}) \cdots \eta_d(f_d) v_d,v_0\rangle
\\
&= \lim_n \langle \tilde{\phi}(f_1*\cdots *f_{i-1}*\lambda(x) h_n * f_i *\cdots *f_d) ,v_0\rangle
\\
&= \lim_n \langle \eta_1(f_1)\cdots \eta_{i-1}(f_{i-1}) \eta_i(\lambda(x) h_n)\eta_{i+1}(f_i*f_{i+1}) \cdots \eta_d(f_d) v_d,v_0\rangle
\\
&=\lim_n \langle \eta_i(\lambda(x) h_n) {v_{i}},{v_{i-1}}\rangle.
\end{aligned}
\end{align} This extends to a well defined linear map on the linear span of $v_i$'s because $\phi\in Y^\perp$ and $\h_0 = \h_d =\C$.  Since $L^1(G)*L^1(G)$ is dense in $L^1(G)$ and $\eta_{i+1}$ is continuous, such $v_i$'s are total in $\h_i$. By density and boundedness, $\check{\eta}_i (x)$ extends to a bounded linear map in $\B(\h_{i},\h_{i-1})$ with norm not exceeding $\|\eta_i\|$. In other words, we have $\check{\eta}_i: G\rightarrow \B(\h_i,\h_{i-1})$  well defined and uniformly bounded by $\|\eta_i\|$. Observe that
\begin{align*}
& \check{\eta}_1(x_1)\cdots \check{\eta}_d (x_d)v_d = \lim_{n_d} \cdots \lim_{n_1} \eta_1 (\lambda (x_1) h_{n_1})\cdots \eta_d(\lambda (x_d) h_{n_d})v_d 
\\ 
&= \lim_{n_d} \cdots \lim_{n_1} \tilde{\phi} (\lambda (x_1) h_{n_1} * \cdots* \lambda (x_{d-1}) h_{n_{d-1}}* \lambda (x_d) k_{n_d}*\cdots *k_{n_d})
\\
&= \lim_{n_d} \cdots \lim_{n_1}  \eta_1(\lambda (x_1) h_{n_1} * \cdots* \lambda (x_{d-1}) h_{n_{d-1}}* \lambda (x_d) k_{n_d}) \eta_2(k_{n_d})\cdots \eta_d(k_{n_d}) v_d 
\\
&= \lim_{n_d} \eta_1(\lambda (x_1\cdots x_d) k_{n_d}) \eta_2(k_{n_d})\cdots \eta_d(k_{n_d}) v_d
\\
&= \lim_{n} \phi(\lambda (x_1\cdots x_d) k_{n} \otimes k_{n}\otimes \cdots \otimes k_{n}) v_d.
\end{align*} 
This shows that the function $\check{\phi}$ from \eqref{check phi} is well defined and in $M_d(G_\mathrm{d})$ with
\begin{align*}
\|\check{\phi}\|_{M_d} \leq \prod_{i=1}^d \|\check{\eta}_i\|\leq   \prod_{i=1}^d \|\eta_i\| = \|\phi\|.
\end{align*}
Since the map $
x\in G\mapsto \lambda(x) g\in L^1(G)$ is continuous for every $g\in C_c(G)$ (see for example \cite[Lemma 6.6.11]{Ped89}), $\langle\check{\eta}_i (\cdot)v_i,v_{i-1}\rangle$ is a pointwise limit of continuous functions, hence measurable. 
Now, the continuity of $\check{\phi}$ follows from Lemma \ref{lem meas cont} by putting 
\begin{align*}
\h= \h_{d-1},\quad A(x) = \check{\eta}_d(x^{-1})v_d, \quad B(y) = [\check{\eta}_1(y))\check{\eta}_2(e)\cdots \check{\eta}_{d-1}(e)]^*v_0.
\end{align*}
This completes the proof.
\end{proof}
\begin{proof}[Proof of Theorem \ref{thm1}]
It only remains to prove that the maps in Lemma \ref{lem Md to Y} and Lemma \ref{lem Y to Md} are inverse to each other. If $\varphi\in M_d(G)$, by definition \eqref{hat} and \eqref{check phi} we have 
\begin{align*}
\check{\hat{\varphi}} (x) &= \lim_n \hat{\varphi} (\lambda(x)k_n\otimes \cdots \otimes k_n) v_d 
\\
&= \lim_n (\varphi, \lambda(x) k_n*\cdots *k_n)_{(L^\infty(G),L^1(G))} 
\\
& = \varphi (x).
\end{align*} for almost every $x\in G$, but since both $\check{\hat{\varphi}}$ and $\varphi$ are continuous, they coincide. Conversely, if $\phi\in Y^\perp$, we have
\begin{align*}
\hat{\check{\phi}} &(f_1\otimes \cdots \otimes f_d) = (\check{\phi}, f_1*\cdots *f_d)_{(L^\infty(G),L^1(G))}
\\
&= \int_{G^n} \check{\phi}(x_1\cdots x_d) f_1(x_1)\cdots f_d(x_d) dx_1 \cdots dx_d
\\
&= \int_{G^n}  \langle\check{\eta}_1(x_1)\cdots \check{\eta}_d (x_d)v_d ,v_0 \rangle f_1(x_1)\cdots f_d(x_d) dx_1 \cdots dx_d
\\
&=\langle{\eta}_1(f_1)\cdots{\eta}_d (f_d)v_d ,v_0 \rangle
\\
&=\phi(f_1\otimes \cdots \otimes f_d)
\end{align*}
for all $f_1,\dots,f_d\in C_c(G)$. By totality and continuity, we get $\hat{\check{\phi}}= \phi$.
\end{proof}

\begin{proof}[Proof of Theorem \ref{thm2}] Hanh-Banach's theorem yields that $Y^\perp \cong (X_d'(G)/Y)^*$ isometrically. Thus, by Theorem \ref{thm1} we have an isometric isomorphism
\begin{align*}
i: \varphi\in M_d(G)\mapsto [\hat{\varphi}] \in ( X_d'(G)/Y)^*
\end{align*} such that
\begin{align}\label{eq dual}
[\hat{\varphi}] ([f_1\otimes \cdots \otimes f_d]) = (\varphi, f_1*\cdots *f_d)_{(L^\infty(G),L^1(G))}.
\end{align} for all $f_1,\dots,f_d\in L^1(G)$. If we restrict its dual map onto $X_d'(G)/Y$, we get an isometry
\begin{align*}
i^*: X_d'(G)/Y\rightarrow  M_d(G)^*
\end{align*} onto a closed subspace of $M_d(G)^*$ (which is a Banach predual of $M_d(G)$) such that $i^*([f_1\otimes\cdots \otimes f_d ]) = f_1*\cdots *f_d$ for all $f_1,\dots,f_d\in L^1(G)$. Since the inclusions $L^1(G)\otimes \cdots \otimes L^1(G)\subseteq X_d'(G)$ and $L^1(G)^{*d}\subseteq L^1(G)\subseteq X_d(G)$ are dense, we deduce that $i^*(X_d'(G)/Y) = X_d(G)$ and $M_d(G)\cong X_d(G)^*$ isometrically.
\end{proof}

\begin{proof}[Proof of Theorem \ref{thm AP}] The "if" part is handled in \cite[Theorem 1.4]{Bat23b}. We now prove the other direction.

Suppose that $\Omega\subseteq G$ is a measurable, finite measured fundamental domain for $\Gamma$ so that we have $G= \bigsqcup_{\gamma\in \Gamma} \gamma \Omega$. We can normalize the Haar measure $dw$ of $G$ so that $\Omega$ has measure 1. Define a unital mapping
\begin{align*}
i: \varphi\in L^\infty (G)\mapsto i (\varphi) \in \ell^\infty (\Gamma),\quad i(\varphi)(\gamma) = \int_{\Omega} \varphi(\gamma w)dw.
\end{align*}
We claim that this map is normal. To see that, it is enough to show that the dual map $i^*$ sends $\ell^1(\Gamma)$ into $L^1(G)$. Take any $f\in \ell^1(\Gamma)$.
Observe that
\begin{align*}
(\varphi, i^*(f))_{(L^\infty(G),L^\infty(G)^*)} 
&= (i(\varphi),f)_{(\ell^\infty(\Gamma),\ell^1(\Gamma))} = \sum_{\gamma\in \Gamma} f(\gamma) \int_\Omega \varphi (\gamma w)dw
\\&= \int_G \left ( \sum_{\gamma \in \Gamma} f(\gamma) 1_{\gamma \Omega}(g) \right ) \varphi (g)dg
\end{align*}
and
\begin{align*}
\int_G\left | \sum_{\gamma \in \Gamma} f(\gamma) 1_{\gamma \Omega}(w) \right | dw \leq \sum_{\gamma \in \Gamma} |f(\gamma)| |\gamma \Omega| = \|f\|_1 <\infty.
\end{align*} This shows that $i^* (f)$ is in $L^1(G)$ and the claim is true. 

Take any $\varphi \in M_d(G)$ and let $\xi_i$'s be as in Lemma \ref{lem cont}. Observe that
\begin{align*}
i(\varphi)(\gamma_1 \cdots \gamma_d)= \int_\Omega \varphi (\gamma_1 \cdots \gamma_d w)dw = \xi_1(\gamma_1)\cdots \xi_{d-1}(\gamma_{d-1}) \int_\Omega \xi_d(\gamma_d w)dw.
\end{align*}
This shows that the mapping
\begin{align*}
i:M_d(G) \rightarrow M_d(\Gamma)
\end{align*}
is well defined and norm decreasing. Moreover, it is $\sigma(M_d(G),X_d(G))$-$\sigma(M_d(\Gamma),X_d(\Gamma))$-continuous since $i^*:\ell^1(\Gamma)\rightarrow L^1(G)$ is well defined and the inclusions $\ell^1(\Gamma)\subseteq X_d(\Gamma)$ and $L^1(G) \subseteq X_d(G)$ are dense. 

Take any $\varphi\in C_c(G)$. Observe that 
\begin{align*}
\|i(\varphi)\|_2^2 =\sum_{\gamma\in \Gamma} \left | \int_\Omega\varphi(\gamma w) dw\right |^2 \leq \sum_{\gamma\in \Gamma}  \int_\Omega |\varphi(\gamma w)|^2 dw  \int_\Omega 1^2 dw = \|\varphi\|_2^2 <\infty.
\end{align*}
In other words, $i(\varphi) \in \ell^2(\Gamma)\subseteq A(\Gamma)$.

Now the proof is straightforward. As $G$ has $M_d$-AP, there is a net $(\varphi_i)$ in $C_c(G)\cap M_d(G)$ that converges to 1 in  $\sigma(M_d(G),X_d(G))$-topology. Then, $(i(\varphi_i))$ is a net in $A(\Gamma)$ that converges to 1 in  $\sigma(M_d(\Gamma),X_d(\Gamma))$-topology. As pointed out in \cite[Definition 2.2 and Remark 2.3]{Bat23b}, this is enough to deduce that $\Gamma$ has $M_d$-AP.
\end{proof}

\bibliographystyle{amsalpha}
\bibliography{mybibfile.bib}
\end{document}